\documentclass[11pt]{amsart}
\usepackage{ifthen,etoolbox,amsmath,amssymb,color,cite,mathtools,colonequals,verbatim}
\usepackage[colorlinks,pagebackref,pdftex,bookmarks=false]{hyperref}
\apptocmd{\sloppy}{\hbadness 10000\relax}{}{}

\numberwithin{equation}{section}

\newtheorem{thm}[equation]{Theorem}
\newtheorem{prop}[equation]{Proposition}
\newtheorem{lemma}[equation]{Lemma}
\newtheorem{cor}[equation]{Corollary}

\theoremstyle{definition}
\newtheorem{rmk}[equation]{Remark}
\newtheorem{notation}[equation]{Notation}
\newtheorem{defn}[equation]{Definition}

\newcommand{\F}{\mathbb{F}}
\newcommand{\bP}{\mathbb{P}}

\newcommand{\mybar}[1]{#1\llap{$\overline{\phantom{\rm#1}}$}}

\begin{document}

\title{Several classes of permutation pentanomials}

\author{Zhiguo Ding}
\address{
  School of Mathematics and Statistics, 
  Hunan Normal University, 
  Changsha 410081, China
}
\email{ding8191@qq.com}

\date{\today}

\keywords{}

\date{\today}

\begin{abstract}
For each prime $p$ and each power $q=p^k$, we present two large classes of permutation polynomials over $\F_{q^2}$ of the form $X^r B(X^{q-1})$ which have at most five terms, where $B(X)$ is a polynomial with coefficients in the prime field of $\F_{q^2}$ except at most one. 
\end{abstract}

\maketitle


\section{Introduction}

A polynomial $f(X)\in\F_q[X]$ is called a \emph{permutation polynomial} if the map $\alpha\mapsto f(\alpha)$ permutes $\F_q$. In recent years, many authors have constructed classes of permutation polynomials having at most five terms and having coefficients in $\{1,-1\}$. In particular, many of these are permutation polynomials over $\F_{q^2}$ with the form $X^r B(X^{q-1})$. 

We will present several classes of permutation polynomials with at most five terms, whose coefficients are all $\pm 1$ except at most one. In particular, we present several classes of permutation pentanomials with coefficients $\pm 1$, and several classes of permutation quadrinomials with coefficients $\pm 1$. 

The following is our first main result, which by use of roots of unity presents two general classes of permutation polynomials with at most five terms, whose coefficients are all $\pm 1$ except at most one.

\begin{thm}\label{pp-5}
Assume $q=p^k$ for some prime $p$ and some integer $k>0$. Suppose $Q,R,S$ are integers of the form $p^i$ for some integer $i\ge 0$. Let $r>0$ be an integer such that $r\equiv Q+R+S\pmod{q+1}$ and $\gcd(r,q-1)=1$. Suppose $m\ge 3$ is an integer with $q^2\equiv 1\pmod m$, and $v$ is a primitive $m$-th root $v$ of unity in\/ $\F_{q^2}$. Write $b\colonequals (1-v^4)/(v^3-v)\in\F_{q^2}$, so that $b\in\{0,\pm 1\}$ if and only if $m\in\{3,4,6\}$. For each $j\in\{1,2\}$ let $f_j(X)\colonequals X^r B_j(X^{q-1})$, where we write
\begin{align*}
B_1(X) &\colonequals X^{Q+R+S}-X^Q-X^R-X^S+b, \\
B_2(X) &\colonequals X^{Q+R}-X^{Q+S}-X^{R+S}+bX^S-1.
\end{align*}
Then the following statements hold:
\begin{enumerate}
\item $f_1(X)$ permutes\/ $\F_{q^2}$ if one of the following holds:
\begin{itemize}
\item $q\equiv 1\pmod m$ and $(Q,R,S) \equiv \pm (1,1,1) \pmod m$ and $\gcd(Q+R+S,q+1)=1$,
\item $q\equiv -1\pmod m$ and $(Q,R,S) \equiv \pm (1,1,1) \pmod m$ and $\gcd(Q+R+S,q-1)=1$,
\item $q\equiv 1\pmod m$ and $(Q,R,S) \equiv \pm (1,1,-1) \pmod m$ and $\gcd(Q+R-S,q+1)=1$;    
\end{itemize}
\item $f_2(X)$ permutes\/ $\F_{q^2}$ if one of the following holds:
\begin{itemize}
\item $q\equiv 1\pmod m$ and $(Q,R,S) \equiv \pm (1,1,-1) \pmod m$ and $\gcd(Q+R+S,q+1)=1$,
\item $q\equiv -1\pmod m$ and $(Q,R,S) \equiv \pm (1,1,-1) \pmod m$ and $\gcd(Q+R+S,q-1)=1$,
\item $q\equiv 1\pmod m$ and $(Q,R,S) \equiv \pm (1,1,1) \pmod m$ and $\gcd(Q+R-S,q+1)=1$,
\item $q\equiv 1\pmod m$ and $(Q,R,S) \equiv \pm (1,-1,1) \pmod m$ and $\gcd(Q-R-S,q+1)=1$.
\end{itemize}
\end{enumerate}
\end{thm}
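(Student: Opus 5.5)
The plan is to use the standard criterion of Zieve/Park–Lee/Wang: $f(X)=X^rB(X^{q-1})$ permutes $\F_{q^2}$ if and only if $\gcd(r,q-1)=1$ and $g(X)\colonequals X^rB(X)^{q-1}$ permutes the group $\mu_{q+1}$ of $(q+1)$-th roots of unity. The first condition is assumed, so everything reduces to $g$ on $\mu_{q+1}$. For $x\in\mu_{q+1}$ we have $x^q=x^{-1}$. Since $Q,R,S$ are powers of $p$, the maps $x\mapsto x^Q$ etc.\ are field automorphisms composed with powering. Also $v^q=v^{\pm1}$ according as $q\equiv\pm1\pmod m$, and $v^Q=v^{\epsilon_Q}$ where $Q\equiv\epsilon_Q\pmod m$. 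This gives control over how $b$ and the exponents behave under Frobenius.

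The key step is to factor $B_j$ using $v$. I would first compute $b$ more explicitly: $b=-(1+v^2)/v$, that is, $b=-(v+v^{-1})$. Then for example
\[
B_1(X)=X^{Q+R+S}-X^Q-X^R-X^S-(v+v^{-1}),
\]
and with $(Q,R,S)\equiv(1,1,1)$ one expects a factorization of the shape $(X^{Q}-v)(X^R-v)(X^S-v)$ twisted by automorphisms. Since $b$ is not symmetric, the more likely form is
\[
B_1(X)\cdot(\text{unit})=\prod (X^{\bullet}-v^{\pm1})-\prod(X^{\bullet}-v^{\mp1}),
\]
or a ratio thereof. Concretely, I would write $B_j(x)=h(x)\,\overline{h(x)}$-type expressions, or exhibit $B_j(x)$ as $L(x)-x^{Q+R+S}\,\overline{L(x)}^{\,q}$. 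Then on $\mu_{q+1}$,
\[
B_j(x)^{q-1}=\frac{B_j(x)^q}{B_j(x)}
\]
becomes $x^{-(Q+R+S)}$ times a product of Möbius-type factors $\frac{x^{Q}-v}{vx^{Q}-1}$ or their inverses. The idea is that $x\mapsto (x-v)/(vx-1)$, up to conjugating $v\mapsto v^q$, maps $\mu_{q+1}$ to itself and also relates to $v\mapsto v^{-1}$. So the first step is to find the right factorization. I will guess it by checking the degree-$0$ and leading coefficients against $b=-(v+v^{-1})$, and then verify by expanding.

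Once $g(x)=x^{r-(Q+R+S)}\cdot\prod\phi_i(x)$ with $r-(Q+R+S)\equiv0\pmod{q+1}$, the extra power drops out. The remaining product should collapse. The mechanism is that $\phi(x)=(x-v)/(vx-1)$ (or its analogue with $v^q$) is a degree-one rational function commuting appropriately with $p$-power maps. Then $\phi(x^Q)=\phi^{(Q)}(x)^Q$, where $\phi^{(Q)}$ uses $v^{1/Q}$, which reduces to $v^{\pm1}$ mod $m$. Thus $g$ becomes something like $\psi(x)^{Q+R+S}$, or $\psi(x)^{Q+R-S}$, for a single bijection $\psi$ of $\mu_{q+1}$ (or, in the $q\equiv-1$ cases, of $\bP^1(\F_q)$ transported). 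It permutes $\mu_{q+1}$ exactly when the exponent is coprime to $q+1$.

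In the $q\equiv-1\pmod m$ cases, $v^q=v^{-1}$ and the factors instead map $\mu_{q+1}$ bijectively onto $\F_q^*\cup\{\infty\}$ or similar. The final power map then needs $\gcd(Q+R+S,q-1)=1$, which explains those hypotheses. The sign variants $\pm(1,1,\pm1)$ correspond to replacing $v$ by $v^{-1}$ in individual factors, which turns a factor into its inverse and flips the sign of that exponent in the final power. Each listed case is handled by the same computation with the appropriate pattern of inverses.

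I expect the main obstacle to be finding and verifying the exact factorization identity for $B_1$ and $B_2$ in terms of $v$. One must also check that no $x\in\mu_{q+1}$ makes $B_j(x)=0$ or hits a pole of some $\phi_i$. This needs a separate argument that $v^{\pm1}\notin\mu_{q+1}$ in the relevant cases, or, when it is, that the boundary points are handled consistently by viewing the maps on $\bP^1$.
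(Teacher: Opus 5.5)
Your reduction is sound. By Lemma~\ref{old} and $r\equiv Q+R+S\pmod{q+1}$, only $x\mapsto x^{Q+R+S}B_j(x)^{q-1}$ on $\mu_{q+1}$ matters. Your observation that $\phi(y)\colonequals(vy+1)/(y+v)$ satisfies $\phi(x^T)=\rho(x)^{\epsilon_T T}$, with $\rho(x)\colonequals(vx+1)/(x+v)$, is exactly what produces the exponents $\pm Q\pm R\pm S$.

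\textbf{The gap.} It lies in the step you expect the factorization to deliver: that $B_j(x)^{q-1}$ is $x^{-(Q+R+S)}$ times a product of such M\"obius factors, so that $g$ collapses to a single $\psi(x)^{n}$. That happens when $B$ is a product such as $\prod_T(vX^T+1)$. But $B_j$ is a difference; as formal identities in $v$,
\[
\begin{aligned}
(v^3-v)B_1(X)&=\prod_{T\in\{Q,R,S\}}(vX^T+1)-v\prod_{T\in\{Q,R,S\}}(X^T+v),\\
(v^3-v)B_2(X)&=(vX^Q+1)(vX^R+1)(X^S+v)-v(X^Q+v)(X^R+v)(vX^S+1).
\end{aligned}
\]
Write either right-hand side as $P_1-vP_2$ and put $n=Q+R+S$. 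For $v\in\F_q$ and $x\in\mu_{q+1}$ one has $P_1(x)^q=x^{-n}P_2(x)$ and $P_2(x)^q=x^{-n}P_1(x)$, hence
\[
x^{n}B_j(x)^{q-1}=\eta\bigl(\Phi_j(x)\bigr),\qquad \Phi_j\colonequals \frac{P_1}{P_2},\qquad \eta(y)\colonequals\frac{1-vy}{y-v}.
\]
The same formula holds for $v\in\mu_{q+1}$: each linear factor then satisfies $F(x)^q=v^{-1}x^{-T}F(x)$, and the scalar $(v^3-v)^{q-1}=-v^{-4}$ compensates.

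So your product of M\"obius factors is only the inner map $\Phi_j$. The outer M\"obius map $\eta$ is indispensable and cannot be absorbed into one power $\psi^{e}$ of a degree-one $\psi$. For example, with $Q=R=S=1$ and $p\ne 3$, the denominator $X^3-3X+b$ of $g$ has three distinct roots, whereas the denominator of $\psi^3$ is a cube. The shape you plan to ``verify by expanding'' is therefore false, and the identity that actually carries the proof is absent from your plan.

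\textbf{How to finish.} Once that identity is in place, your own bookkeeping completes the argument.
\begin{itemize}
\item $\Phi_1=\rho^{\epsilon_QQ+\epsilon_RR+\epsilon_SS}$ and $\Phi_2=\rho^{\epsilon_QQ+\epsilon_RR-\epsilon_SS}$, which are exactly the exponents in the listed cases.
\item When $q\equiv 1\pmod m$ (so $v\in\F_q$), $\rho$ and $\eta$ permute $\mu_{q+1}$. When $q\equiv -1\pmod m$ (so $v\in\mu_{q+1}$), $\rho$ maps $\mu_{q+1}$ onto $\bP^1(\F_q)$ and $\eta$ maps $\bP^1(\F_q)$ onto $\mu_{q+1}$. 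This is where $\gcd(\cdot,q+1)$ versus $\gcd(\cdot,q-1)$ comes from.
\item Non-vanishing of $B_j$ on $\mu_{q+1}$ also follows. For $v\in\F_q$, $P_1(x)=vP_2(x)$ together with its $q$-th power forces $P_1(x)=P_2(x)=0$, which is impossible since $-v,-v^{-1}\notin\mu_{q+1}$. For $v\in\mu_{q+1}$, $\Phi_j(x)\in\bP^1(\F_q)$ cannot equal $v$, and in the listed cases $P_1,P_2$ have no common root.
\end{itemize}
This is in substance the paper's argument. The paper identifies $B_j$, up to a scalar and after $B\mapsto\pm X^{Q+R+S}B(1/X)$ when needed, with the denominator of $\eta\circ X^{Q+R\pm S}\circ\rho$ in Theorems~\ref{mth} and~\ref{mth'}. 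Your factor-by-factor form would even avoid that reversal step.
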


Theorem~\ref{pp-5} is a direct consequence from the combination of the following two classification results on permutation pentanomials.

\begin{thm}\label{mth}
Assume $q=p^k$ for some prime $p$ and some integer $k>0$. Suppose $Q,R,S$ are integers of the form $p^i$ for some integer $i\ge 0$. Let $r>0$ be an integer with $r \equiv Q+R+S \pmod{q+1}$. Let $m\ge 3$ be an integer with $q\equiv \pm 1\pmod m$. Write $b\colonequals (1-v^4)/(v^3-v)$ for a primitive $m$-th root $v$ of unity, so that $b\in\{0,\pm 1\}$ if and only if $m\in\{3,4,6\}$. Write $f(X)\colonequals X^r B(X^{q-1})$ where one of the following holds:
\begin{enumerate}
\item $B(X)\colonequals X^{Q+R+S}-X^Q-X^R-X^S+b$ with $(Q,R,S) \equiv \pm (1,1,1) \pmod m$;
\item $B(X)\colonequals X^{Q+R}-X^{Q+S}-X^{R+S}+bX^S-1$ with $(Q,R,S) \equiv \pm (1,1,-1) \pmod m$.
\end{enumerate}
Then $f(X)$ permutes\/ $\F_{q^2}$ if and only if $\gcd(r,q-1)=1$ and one of the following holds:
\begin{enumerate}
\item $q\equiv 1\pmod m$ and $\gcd(Q+R+S,q+1)=1$;
\item $q\equiv -1\pmod m$ and $\gcd(Q+R+S,q-1)=1$.
\end{enumerate}  
\end{thm}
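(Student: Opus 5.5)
We will use the standard criterion of Zieve/Park–Lee: $X^rB(X^{q-1})$ permutes $\F_{q^2}$ if and only if $\gcd(r,q-1)=1$ and $g(X)\colonequals X^rB(X)^{q-1}$ permutes the group $\mu_{q+1}$ of $(q+1)$-th roots of unity. We also use that $B$ has no roots on $\mu_{q+1}$, which we check along the way. The idea is to find a degree-one rational function $\ell$, built from the root of unity $v$ and defined over $\F_{q^2}$, that factors $B$ into linear factors twisted by the Frobenius-type powers $X^Q,X^R,X^S$. Concretely, in case (1) we aim to write
\[
B(X)=c\,\bigl(X^Q-\alpha\bigr)\bigl(X^R-\alpha\bigr)\bigl(X^S-\alpha\bigr)-\text{(correction)},
\]
or, better, a M\"obius-type identity of the form
\[
B(x)\cdot(\text{something})=\prod\bigl(x^{Q}-\alpha\bigr)\ \text{ mixed with } \ \prod\bigl(x^{Q}-\beta\bigr)
\]
with $\alpha=v$ and $\beta=v^{-1}$ (or $v^{\pm2}$). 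The defining value $b=(1-v^4)/(v^3-v)$ is exactly what makes such a factorization work. We would first verify the key identity: for $x\in\mu_{q+1}$, $B(x)^{q}=x^{-(Q+R+S)}\overline{B}(x)$ with $\overline{B}$ the conjugate-coefficient polynomial. Hence $g(x)=x^{r-(Q+R+S)}\overline{B}(x)/B(x)$, and since $r\equiv Q+R+S\pmod{q+1}$ this reduces to $g(x)=\overline{B}(x)/B(x)$ on $\mu_{q+1}$.

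Next we exploit $Q,R,S\equiv\pm1\pmod m$ being powers of $p$. For $\zeta=v^{\pm1}$ we have $\zeta^{Q}=\zeta^{\pm1}$, so $x\mapsto x^Q$ is additive and fixes or inverts $v$. We would search for a M\"obius map $\ell(x)=(x-v)/(x-v^{-1})$, or a variant, such that $\ell(x)^{Q}\ell(x)^{R}\ell(x)^{S}$, suitably arranged, equals a constant times $\overline{B}(x)/B(x)$ after expanding. The substitution $y=\ell(x)$ should turn $g$ into $y\mapsto\lambda\, y^{Q+R+S}$ on some image set. That set is $\mu_{q+1}$ when $v^{q}=v$, i.e.\ $q\equiv1\pmod m$, since then $\ell$ maps $\mu_{q+1}$ to $\mu_{q+1}$. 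When $q\equiv-1\pmod m$, $\ell$ should instead map $\mu_{q+1}\setminus\{\ldots\}$ onto $\F_q^*$ (or $\F_q$ plus infinity). Then $y\mapsto y^{Q+R+S}$ permutes $\mu_{q+1}$ if and only if $\gcd(Q+R+S,q+1)=1$, and permutes $\F_q^*$ if and only if $\gcd(Q+R+S,q-1)=1$. This gives both directions of the equivalence.

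Case (2) should follow from case (1) by replacing $X^S$ with $X^{-S}$, i.e.\ multiplying through by $X^S$, up to a sign or inversion. We would make this reduction explicit rather than redo the computation.

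The main obstacle is finding and verifying the exact identity
\[
\ell(x)^{Q+R+S}\cdot(\text{const}) = \ell(g(x)),
\]
that is, checking that $B$ expands correctly using $b(v^3-v)=1-v^4$. We would first try the case $Q=R=S=1$ by direct expansion of $(x-v)^3$ against $(x-v^{-1})^3$-type expressions, and then lift to general $p$-powers using additivity of $x\mapsto x^{p^i}$. After that, we handle the finitely many exceptional points (poles of $\ell$, and roots of $B$) separately to confirm bijectivity and the ``only if'' direction.
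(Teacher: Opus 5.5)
Your framework matches the paper's. Lemma~\ref{old} reduces the problem to the map $x\mapsto x^{n}B(1/x)/B(x)$ on $\mu_{q+1}$, where $n=Q+R+S$, and one then conjugates by a degree-one map to $y\mapsto y^n$ on $\mu_{q+1}$ or on $\bP^1(\F_q)$, according as $v\in\F_q$ or $v\in\mu_{q+1}$. (Note $b=-(v+v^{-1})\in\F_q$ in both cases. Since $B(x)^q=B(x^{-1})$ on $\mu_{q+1}$, the reciprocal polynomial is what appears. Your ``conjugate-coefficient'' polynomial equals $B$ and would make $g\equiv 1$.)

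But there is a genuine gap: the proposal stops exactly where the proof begins. The conjugation identity is left as a search, and the one concrete candidate you name does not work. If $\ell\circ g=\lambda\,\ell^n$, then $\ell^{-1}(0)$ and $\ell^{-1}(\infty)$ must be fixed points of $g$. For $\ell(x)=(x-v)/(x-v^{-1})$ this would force $g(v)=v$. Yet already for $Q=R=S=1$ and odd $p$ one computes $N(v)-vD(v)=-2v(v^2-1)^2(v^2+1)$, so $g(v)=v$ only when $v^2=-1$. The correct fixed points are $-v$ and $-v^{-1}$.

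The missing step is short. Put $\rho(X)=(vX+1)/(X+v)$ and $\eta(X)=\rho^{-1}(X)=(-vX+1)/(X-v)$. If $v^T=v$ then $(vX+1)^T=vX^T+1$ and $(X+v)^T=X^T+v$. So for $(Q,R,S)\equiv(1,1,1)\pmod m$ the terms $X^{Q+R},X^{Q+S},X^{R+S}$ cancel, and
\[
(vX+1)^{n}-v(X+v)^{n}=(v^3-v)\bigl(X^{n}-X^Q-X^R-X^S\bigr)+(1-v^4)=(v^3-v)B(X).
\]
Hence $X^nB(1/X)/B(X)=\eta\circ X^n\circ\rho$.

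For $T\equiv-1\pmod m$ use instead $(vX+1)^T=v^{-1}X^T+1$ and $(X+v)^T=X^T+v^{-1}$. The same expansion gives $(v^2-1)B(X)$ in case (2) with pattern $(1,1,-1)$. For the negated patterns it gives $\pm X^nB(1/X)$ up to a constant, so $g$ changes only by $y\mapsto 1/y$.

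Equivalently, $\rho(x)^T=\rho(x^{-T})$ when $v^T=v^{-1}$. This is the valid form of your $X^S\mapsto X^{-S}$ remark, but it must be applied to this identity, not to the permutation statements. On $\mu_{q+1}$ one has $x^{-S}=x^{qS}$, and when $q\equiv1\pmod m$ that turns case (2) into the pattern $(1,1,-1)$, not case (1).

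To finish:
\begin{itemize}
\item Lemmas~\ref{deg1mu} and~\ref{mu} show that $\rho$ permutes $\mu_{q+1}$ when $v\in\F_q$ and maps it onto $\bP^1(\F_q)$ when $v\in\mu_{q+1}$.
\item Since $\eta\circ X^n\circ\rho$ has degree $n$, its numerator and denominator are coprime. Therefore $B$ has no roots on $\mu_{q+1}$, which is the point you deferred.
\item Both directions of the equivalence then follow at once.
\end{itemize}
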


\begin{thm}\label{mth'}
Assume $q=p^k$ for some prime $p$ and some integer $k>0$. Suppose $Q,R,S$ are integers of the form $p^i$ for some integer $i\ge 0$. Let $r>0$ be an integer with $r \equiv Q+R+S \pmod{q+1}$. Let $m\ge 3$ be an integer with $q\equiv \pm 1\pmod m$. Write $b\colonequals (1-v^4)/(v^3-v)$ for a primitive $m$-th root $v$ of unity, so that $b\in\{0,\pm 1\}$ if and only if $m\in\{3,4,6\}$. Write $f(X)\colonequals X^r B(X^{q-1})$ where one of the following holds:
\begin{enumerate}
\item $B(X)\colonequals X^{Q+R}-X^{Q+S}-X^{R+S}+bX^S-1$ with $(Q,R,S) \equiv \pm (1,1,1) \pmod m$;
\item $B(X)\colonequals X^{R+S}-X^{Q+R}-X^{Q+S}+bX^Q-1$ with $(Q,R,S) \equiv \pm (1,-1,1) \pmod m$;
\item $B(X)\colonequals X^{Q+R+S}-X^Q-X^R-X^S+b$ with $(Q,R,S) \equiv \pm (1,1,-1) \pmod m$.
\end{enumerate}
Then $f(X)$ permutes\/ $\F_{q^2}$ if and only if $q\equiv 1\pmod m$ and $\gcd(r,q-1)=1$ and $\gcd(Q+R-S,q+1)=1$.
\end{thm}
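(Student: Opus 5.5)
We reduce to the standard criterion: $f(X)=X^rB(X^{q-1})$ permutes $\F_{q^2}$ if and only if $\gcd(r,q-1)=1$ and $g(X)\colonequals X^rB(X)^{q-1}$ permutes the group $\mu_{q+1}$ of $(q+1)$-th roots of unity. So the work is to understand $g$ on $\mu_{q+1}$.

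Take $x\in\mu_{q+1}$, so $x^q=x^{-1}$, and use $r\equiv Q+R+S\pmod{q+1}$. Since $Q,R,S$ are powers of $p$, every monomial $x^Q,x^R,x^S$ is additive in its argument. In case (3) we have $B(x)=x^{Q+R+S}-x^Q-x^R-x^S+b$. The first step is to factor this by means of $v$: write $b$ in terms of $v$ and look for a factorization $B(x)=(x^Q-\alpha)(x^R-\beta)(x^S-\gamma)$ plus a correction. I expect that the congruence $(Q,R,S)\equiv\pm(1,1,-1)\pmod m$ makes $v^Q=v^R=v^{\pm1}$ and $v^S=v^{\mp1}$. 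Then $B(x)^q$ can be written, via $b^q$ and $v^q$, as a rational multiple of $x^{-(Q+R+S)}B$ evaluated at a Möbius-transformed argument.

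The second step computes $g(x)=x^rB(x)^q/B(x)$ explicitly. I would aim to show that $g$ is conjugate, by a degree-one rational map $\phi$ (built from $v$) sending $\mu_{q+1}$ bijectively onto either $\mu_{q+1}$ or $\F_q\cup\{\infty\}$, to a monomial-type map $y\mapsto y^{Q+R-S}$ on $\mu_{q+1}$.

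The key distinction is which of these two targets $\phi$ uses:
\begin{itemize}
\item If $q\equiv1\pmod m$, then $v\in\F_q$, and $\phi$ maps $\mu_{q+1}$ to $\mu_{q+1}$. The conjugated map is $x^{Q+R-S}$ times harmless factors, and this permutes $\mu_{q+1}$ exactly when $\gcd(Q+R-S,q+1)=1$. The sign flip of $S$ relative to Theorem~\ref{mth} comes from the $-1$ residue, which turns $x^S$ into $x^{-S}$ after conjugation.
\item If $q\equiv-1\pmod m$, then $v^q=v^{-1}$, and the analogous conjugation produces a map on $\F_q\cup\{\infty\}$. I would show it is not injective, for example by exhibiting two distinct roots of $B$, or a collision $g(x)=g(x')$ arising from $v$ and $v^{-1}$ both lying in $\mu_{q+1}$. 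This explains why only $q\equiv1\pmod m$ survives.
\end{itemize}

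Cases (1) and (2) reduce to case (3). Multiplying $B$ by a suitable monomial and substituting $x\mapsto x^{-1}$ on $\mu_{q+1}$ (for which $g$ changes by inversion) permutes the roles of $Q,R,S$ and interchanges the pentanomial shapes. Inversion is a bijection of $\mu_{q+1}$, so the permutation property is preserved.

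Two further checks are needed. First, $B$ has no roots on $\mu_{q+1}$ in the good case; otherwise $g$ sends a point to $0\notin\mu_{q+1}$. Second, the "only if" direction must be handled, including the degenerate values of $b$ when $m\in\{3,4,6\}$.

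The main obstacle is finding the right factorization of $B$ and the conjugating map $\phi$. I would first try $\phi(x)=(x-v)/(vx-1)$, or its analogue with $v^{-1}$, and then verify by direct expansion using the identity $b(v^3-v)=1-v^4$.
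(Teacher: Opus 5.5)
Your overall route is the paper's: Lemma~\ref{old}, then conjugation by a degree-one map built from $v$ to the monomial $X^{Q+R-S}$, with the split $v\in\F_q$ versus $v\in\mu_{q+1}$. But one step you rely on is false, and the central identity is never produced.

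\textbf{The reduction of case (1) to case (3) fails.} The natural way to pass between shapes is to pull out a monomial and use $x^{-S}=x^{qS}$ on $\mu_{q+1}$. For instance, $B_1(x)=x^S\bigl(x^{Q+R+qS}-x^Q-x^R-x^{qS}+b\bigr)$ has the case-(3) shape for $(Q,R,qS)$. However, this multiplies the residue of $S$ by $q$. So for $q\equiv 1\pmod m$ the residues stay $\pm(1,1,1)$, and you land in case (1) of Theorem~\ref{mth}, not case (3) of Theorem~\ref{mth'}. The trick does work for case (2), whose residues already contain a $-1$, but not for case (1). No cleverer reduction can repair this. If $p\equiv 1\pmod m$, every power of $p$ is $\equiv 1\pmod m$, so case (3) is empty. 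Case (1) is not empty and does give permutations: for $q=7$, $m=3$, $Q=R=S=1$, $r=11$ one gets $x^{11}B(x)^{6}=x$ on $\mu_8$. The same example refutes your heuristic that $Q+R-S$ ``comes from the $-1$ residue'', since case (1) has no $-1$ residue. In the paper, $Q+R-S$ is simply the exponent being conjugated. The residue pattern only decides which three of the eight coefficients of $D(X)=(vX+1)^{Q+R}(X+v)^S-v(X+v)^{Q+R}(vX+1)^S$ vanish. Each case is matched directly: $B=D/(v^3-v)$ for residues $(1,1,1)$ in case (1), $B=D/(v^2-1)$ for $(1,1,-1)$ in case (3), and the remaining classes via $B$ or its reversal $\pm X^{Q+R+S}B(1/X)$ together with Lemma~\ref{old}.

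\textbf{The central identity is missing.} Identifying $B$ with a multiple of $D$ is what you call ``the main obstacle'' and leave open. Your guessed form $(x^Q-\alpha)(x^R-\beta)(x^S-\gamma)$ plus a correction is not the right structure. $D$ is a difference of two such triple products, and it is exactly the denominator of $\eta\circ X^{Q+R-S}\circ\rho$, where $\rho=(vX+1)/(X+v)$ and $\eta=\rho^{-1}$; the numerator is $N(X)=X^{Q+R+S}D(1/X)$. Your $\phi=(x-v)/(vx-1)$ produces this conjugation only after replacing $v$ by $-v$; as written it yields $b\mapsto -b$. Once the identity is in hand, for $v\in\F_q$ one has $a\in\F_q$ and $x^rB(x)^{q-1}=N(x)/D(x)$ on $\mu_{q+1}$. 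This gives both directions via $\gcd(Q+R-S,q+1)$.

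\textbf{The case $q\equiv-1\pmod m$.} Non-injectivity cannot come from the conjugated map. That map is $X^{Q+R-S}$ on $\bP^1(\F_q)$, which is bijective whenever $\gcd(Q+R-S,q-1)=1$. The real obstruction is that $(vX+1)(X+v)$ divides both $N$ and $D$, so $B(-v)=B(-v^{-1})=0$. This can also be checked directly using $b=-(v+v^{-1})$. Since $(-v)^{q+1}=v^{q+1}$, these roots lie in $\mu_{q+1}$ exactly when $v\in\mu_{q+1}$, and then $x^rB(x)^{q-1}$ sends them to $0$. Your ``roots of $B$'' option is therefore the right one, but you must actually exhibit the roots.

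Finally, the cases $Q+R-S\le 0$ also need a word.
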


\begin{rmk}
In light of Lemma~\ref{old}, Theorems~\ref{pp-5}, \ref{mth}, and ~\ref{mth'} remain true if $B(X)$ is replaced with $\pm X^{Q+R+S} B(1/X)$. As one can see in the proofs, permutation polynomials in Theorems~\ref{mth} come from $X^{Q+R+S}$, while permutation polynomials in Theorem~\ref{mth'} come from $X^{Q+R-S}$. 
\end{rmk}

As direct consequences, let us look at the special cases of Theorem~\ref{pp-5}, \ref{mth}, and ~\ref{mth'} in which $m\in\{3,4,6\}$, respectively. 

The $m\in\{3,6\}$ case of Theorem~\ref{pp-5} is as follows, which presents four large classes of permutation pentanomials with coefficients $\pm 1$.

\begin{cor}\label{pp-36th}
Assume $q=p^k$ for some prime $p\ne 3$ and some integer $k>0$. Suppose $Q,R,S$ are integers of the form $p^i$ for some integer $i\ge 0$. Let $r>0$ be an integer such that $r\equiv Q+R+S\pmod{q+1}$ and $\gcd(r,q-1)=1$. For each $j\in\{1,2,3,4\}$ let $f_j(X)\colonequals X^r B_j(X^{q-1})$, where we write
\begin{align*}
B_1(X) &\colonequals X^{Q+R+S}-X^Q-X^R-X^S+1, \\
B_2(X) &\colonequals X^{Q+R+S}-X^Q-X^R-X^S-1, \\
B_3(X) &\colonequals X^{Q+R}-X^{Q+S}-X^{R+S}+X^S-1,\\
B_4(X) &\colonequals X^{Q+R}-X^{Q+S}-X^{R+S}-X^S-1.
\end{align*}
Then the following statements hold:
\begin{enumerate}
\item for each $j\in\{1,2\}$, $f_j(X)$ permutes\/ $\F_{q^2}$ if one of the following holds:
\begin{itemize}
\item $q\equiv 1\pmod 3$ and $(Q,R,S) \equiv \pm (1,1,1) \pmod 3$ and $\gcd(Q+R+S,q+1)=1$,
\item $q\equiv -1\pmod 3$ and $(Q,R,S) \equiv \pm (1,1,1) \pmod 3$ and $\gcd(Q+R+S,q-1)=1$,
\item $q\equiv 1\pmod 3$ and $(Q,R,S) \equiv \pm (1,1,-1) \pmod 3$ and $\gcd(Q+R-S,q+1)=1$;    
\end{itemize}
\item for each $j\in\{3,4\}$, $f_j(X)$ permutes\/ $\F_{q^2}$ if one of the following holds:
\begin{itemize}
\item $q\equiv 1\pmod 3$ and $(Q,R,S) \equiv \pm (1,1,-1) \pmod 3$ and $\gcd(Q+R+S,q+1)=1$,
\item $q\equiv -1\pmod 3$ and $(Q,R,S) \equiv \pm (1,1,-1) \pmod 3$ and $\gcd(Q+R+S,q-1)=1$,
\item $q\equiv 1\pmod 3$ and $(Q,R,S) \equiv \pm (1,1,1) \pmod 3$ and $\gcd(Q+R-S,q+1)=1$,
\item $q\equiv 1\pmod 3$ and $(Q,R,S) \equiv \pm (1,-1,1) \pmod 3$ and $\gcd(Q-R-S,q+1)=1$.
\end{itemize}
\end{enumerate}
\end{cor}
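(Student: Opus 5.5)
This corollary should follow directly from Theorem~\ref{pp-5} by specializing to $m\in\{3,6\}$. We only need to compute $b$ for these values and check that the hypotheses match.

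First compute $b=(1-v^4)/(v^3-v)$.
\begin{itemize}
\item For $m=3$: $v^3=1$ and $v^4=v$, so $b=(1-v)/(1-v)=1$. This uses $v\ne 1$, which holds because $v$ is primitive.
\item For $m=6$: $v^3=-1$ and $v^4=-v$, so $b=(1+v)/(-1-v)=-1$. This uses $v\ne -1$, which holds because $v$ has order $6$.
\end{itemize}
So $B_1,B_3$ in the corollary are the polynomials $B_1,B_2$ of Theorem~\ref{pp-5} with $m=3$. Likewise, $B_2,B_4$ are those same polynomials with $m=6$.

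Next check the standing hypotheses.
\begin{itemize}
\item Since $p\ne 3$, we have $q\equiv \pm1\pmod 3$. Hence $q^2\equiv 1\pmod 3$, and a primitive cube root of unity exists in $\F_{q^2}$.
\item For $m=6$ we need $q^2\equiv 1\pmod 6$. This requires $p\ne 2$ as well as $p\ne 3$. When $p=2$ there is no primitive $6$th root of unity, but then $-1=1$. So $B_2=B_1$ and $B_4=B_3$ coefficientwise, and the $m=6$ claims reduce to the $m=3$ claims. For odd $p\ne 3$, $q$ is odd and $q\equiv\pm1\pmod 3$, so $q\equiv \pm1\pmod 6$.
\end{itemize}

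Now translate the congruence conditions. Since $Q,R,S$ are powers of $p$, they are odd when $p$ is odd. So $Q\equiv \pm1\pmod 3$ lifts uniquely to $Q\equiv\pm1\pmod 6$ with the same sign, and similarly for $R,S$. Therefore $(Q,R,S)\equiv\pm(1,1,1)\pmod 3$ is equivalent to $(Q,R,S)\equiv\pm(1,1,1)\pmod 6$, and the same holds for the other sign patterns. In the same way, $q\equiv\pm1\pmod 3$ is equivalent to $q\equiv\pm1\pmod 6$ with the same sign. Each bullet of the corollary then becomes exactly the corresponding bullet of Theorem~\ref{pp-5} with $m=3$ or $m=6$, and the conclusion follows.

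The only point needing care is the case $p=2$ with $m=6$, handled by the coefficient coincidence above. Apart from that, the argument is bookkeeping.
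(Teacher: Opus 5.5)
Your proposal is correct and matches the paper, which obtains this corollary directly as the $m\in\{3,6\}$ case of Theorem~\ref{pp-5}, with $b=1$ for $m=3$ and $b=-1$ for $m=6$. Two points you make explicit are left implicit in the paper: the case $p=2$ (no primitive sixth root exists, but $B_2=B_1$ and $B_4=B_3$ coefficientwise), and the lifting of the congruences from modulo $3$ to modulo $6$ for odd $p$.
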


The $m=4$ case of Theorem~\ref{pp-5} can be restated as follows, which presents two large classes of permutation quadrinomials with coefficients $\pm 1$.

\begin{cor}\label{pp-4th}
Assume $q=p^k$ for some odd prime $p$ and some integer $k>0$. Suppose $Q,R,S$ are integers of the form $p^i$ for some integer $i\ge 0$. Let $r>0$ be an integer such that $r\equiv Q+R+S\pmod{q+1}$ and $\gcd(r,q-1)=1$. For each $j\in\{1,2\}$ let $f_j(X)\colonequals X^r B_j(X^{q-1})$ where
\begin{align*}
B_1(X) &\colonequals X^{Q+R}+X^{Q+S}+X^{R+S}-1, \\
B_2(X) &\colonequals X^{Q+R}-X^{Q+S}-X^{R+S}-1.
\end{align*}
Then the following statements hold:
\begin{enumerate}
\item $f_1(X)$ permutes\/ $\F_{q^2}$ if one of the following holds:
\begin{itemize}
\item $q\equiv 1\pmod 4$ and $(Q,R,S) \equiv \pm (1,1,1) \pmod 4$ and $\gcd(Q+R+S,q+1)=1$,
\item $q\equiv -1\pmod 4$ and $(Q,R,S) \equiv \pm (1,1,1) \pmod 4$ and $\gcd(Q+R+S,q-1)=1$,
\item $q\equiv 1\pmod 4$ and $(Q,R,S) \equiv \pm (1,1,-1) \pmod 4$ and $\gcd(Q+R-S,q+1)=1$;    
\end{itemize}
\item $f_2(X)$ permutes\/ $\F_{q^2}$ if one of the following holds:
\begin{itemize}
\item $q\equiv 1\pmod 4$ and $(Q,R,S) \equiv \pm (1,1,-1) \pmod 4$ and $\gcd(Q+R+S,q+1)=1$,
\item $q\equiv -1\pmod 4$ and $(Q,R,S) \equiv \pm (1,1,-1) \pmod 4$ and $\gcd(Q+R+S,q-1)=1$,
\item $q\equiv 1\pmod 4$ and $(Q,R,S) \equiv \pm (1,1,1) \pmod 4$ and $\gcd(Q+R-S,q+1)=1$,
\item $q\equiv 1\pmod 4$ and $(Q,R,S) \equiv \pm (1,-1,1) \pmod 4$ and $\gcd(Q-R-S,q+1)=1$.
\end{itemize}
\end{enumerate}
\end{cor}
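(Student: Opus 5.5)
The plan is to specialize Theorem~\ref{pp-5} to $m=4$ and then use the Remark following Theorem~\ref{mth'} (that is, Lemma~\ref{old}) to rewrite the first polynomial in the stated shape.

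First, the hypotheses of Theorem~\ref{pp-5} hold with $m=4$. Since $p$ is odd, $q$ is odd, so $q\equiv\pm1\pmod 4$ and $q^2\equiv 1\pmod 4$. Hence $\F_{q^2}$ contains a primitive $4$-th root of unity $v$. Then $v^4=1$, while $v^3-v=v(v^2-1)=-2v\ne 0$ because $p\ne 2$. So $b=(1-v^4)/(v^3-v)=0$, consistent with the statement that $b\in\{0,\pm1\}$ for $m=4$. The remaining hypotheses on $Q,R,S,r$ are the same as in Theorem~\ref{pp-5}. Substituting $b=0$, Theorem~\ref{pp-5} concerns
\[
\tilde B_1(X)=X^{Q+R+S}-X^Q-X^R-X^S,\qquad \tilde B_2(X)=X^{Q+R}-X^{Q+S}-X^{R+S}-1 .
\]

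Second, match these with the polynomials in the corollary. The polynomial $\tilde B_2$ is literally $B_2$, so part (2) is exactly part (2) of Theorem~\ref{pp-5} with $m=4$. For part (1), compute
\[
-X^{Q+R+S}\tilde B_1(1/X)=-\bigl(1-X^{R+S}-X^{Q+S}-X^{Q+R}\bigr)=X^{Q+R}+X^{Q+S}+X^{R+S}-1=B_1(X).
\]
By the Remark after Theorem~\ref{mth'}, Theorem~\ref{pp-5} remains valid when $B(X)$ is replaced by $\pm X^{Q+R+S}B(1/X)$, with the same $r$ and the same conditions. Hence the three sufficient conditions of Theorem~\ref{pp-5}(1) for $m=4$ are sufficient conditions for $X^rB_1(X^{q-1})$ to permute $\F_{q^2}$. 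These are exactly the three bullets of part (1), read modulo $4$.

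The only step needing care is applying the Remark. If it has to be justified directly, one checks that $X^r\bigl(\pm X^{(q-1)(Q+R+S)}\tilde B_1(X^{1-q})\bigr)$ agrees, as a function on $\F_{q^2}$, with $\pm\tilde f_1(X^q)$ times a power of $X$. This uses $r\equiv Q+R+S\pmod{q+1}$ and $x^{q^2}=x$. Since $x\mapsto x^q$ is a bijection of $\F_{q^2}$, the permutation property then transfers. I expect Lemma~\ref{old} to package exactly this argument, so the proof reduces to the substitution above.
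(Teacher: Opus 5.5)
Your proposal is correct and follows the paper's route: the corollary is Theorem~\ref{pp-5} at $m=4$ (where $b=0$), with part (2) read off verbatim and part (1) obtained through the reciprocal substitution $B\mapsto -X^{Q+R+S}B(1/X)$ allowed by the Remark after Theorem~\ref{mth'}. Your direct check is also fine, provided you note that the leftover power of $X$ is $X^{(q-1)(Q+R+S-r)}$, which is identically $1$ on $\F_{q^2}^*$ because $q^2-1$ divides $(q-1)(Q+R+S-r)$; hence $f_1(x)=-\tilde f_1(x^q)$ for every $x\in\F_{q^2}$.
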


The $m\in\{3,6\}$ case of Theorem~\ref{mth} can be restated as follows.

\begin{cor}\label{36th}
Assume $q=p^k$ for some prime $p\ne 3$ and some integer $k>0$. Suppose $Q,R,S$ are integers of the form $p^i$ for some integer $i\ge 0$. Let $r>0$ be an integer with $r \equiv Q+R+S \pmod{q+1}$. Write $f(X)\colonequals X^r B(X^{q-1})$ where one of the following holds:
\begin{enumerate}
\item $B(X)\colonequals X^{Q+R+S}-X^Q-X^R-X^S+1$ with $(Q,R,S) \equiv \pm (1,1,1) \pmod 3$;
\item $B(X)\colonequals X^{Q+R+S}-X^Q-X^R-X^S-1$ with $(Q,R,S) \equiv \pm (1,1,1) \pmod 3$;
\item $B(X)\colonequals X^{Q+R}-X^{Q+S}-X^{R+S}+X^S-1$ with $(Q,R,S) \equiv \pm (1,1,-1) \pmod 3$;
\item $B(X)\colonequals X^{Q+R}-X^{Q+S}-X^{R+S}-X^S-1$ with $(Q,R,S) \equiv \pm (1,1,-1) \pmod 3$.
\end{enumerate}
Then $f(X)$ permutes\/ $\F_{q^2}$ if and only if $\gcd(r,q-1)=1$ and one of the following holds:
\begin{enumerate}
\item $q\equiv 1\pmod 3$ and $\gcd(Q+R+S,q+1)=1$;
\item $q\equiv -1\pmod 3$ and $\gcd(Q+R+S,q-1)=1$.
\end{enumerate}  
\end{cor}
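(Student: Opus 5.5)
Corollary~\ref{36th} is the specialization of Theorem~\ref{mth} to $m\in\{3,6\}$, so the plan is to check that the hypotheses and the value of $b$ match. Since $p\ne 3$, we have $q^2\equiv 1\pmod 3$, so $q\equiv\pm1\pmod 3$. Moreover $q\equiv\pm 1\pmod 6$ whenever $p\ne 2$; for $p=2$ only $m=3$ is available. So for every admissible $q$ the integer $m=3$ satisfies the hypothesis of Theorem~\ref{mth}, and for odd $q$ so does $m=6$.

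Next we compute $b=(1-v^4)/(v^3-v)$. For $m=3$ we have $v^3=1$ and $v^4=v$, so
\[
b=\frac{1-v}{1-v}=1 .
\]
For $m=6$, $v^3=-1$ and $v^4=-v$, so
\[
b=\frac{1+v}{-1-v}=-1 .
\]
In both cases $v\ne 1$ and $v\ne -1$ (the latter because $v$ is a primitive sixth root and $p\ne 2$), so the denominators are nonzero. Hence $m=3$ yields cases (1) and (3) of the corollary, with $b=1$, and $m=6$ yields cases (2) and (4), with $b=-1$.

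It remains to translate the congruence conditions modulo $m$ into conditions modulo $3$. Each of $Q,R,S$ is a power of $p$, hence prime to $6$ when $p\ge 5$. In that case a residue mod $6$ is determined by its residue mod $3$ (odd and $\equiv\pm1$ mod 3), so $(Q,R,S)\equiv\pm(1,1,1)\pmod 6$ iff $(Q,R,S)\equiv\pm(1,1,1)\pmod 3$, and likewise for $\pm(1,1,-1)$. Similarly $q\equiv\pm1\pmod 6$ iff $q\equiv\pm1\pmod 3$, with matching signs. The conclusion of Theorem~\ref{mth} then transcribes verbatim.

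The main obstacle is $p=2$ in cases (2) and (4): there $m=6$ is unavailable, and we need $b=-1$. Since $-1=1$ in characteristic $2$, the polynomials in (2) and (4) coincide with those in (1) and (3), and those cases are covered by $m=3$. This finishes the plan.
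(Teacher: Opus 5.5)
Your proposal is correct and follows the paper's route: the paper obtains this corollary by specializing Theorem~\ref{mth} to $m=3$ (where $b=1$) and $m=6$ (where $b=-1$), with the congruences mod $6$ reducing to congruences mod $3$. Your explicit treatment of $p=2$ in cases (2) and (4), where $m=6$ is unavailable but $-1=1$ makes those polynomials coincide with cases (1) and (3), fills in a point the paper leaves implicit.
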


The $m=4$ case of Theorem~\ref{mth} can be restated as follows.

\begin{cor}\label{4th}
Assume $q=p^k$ for some odd prime $p$ and some integer $k>0$. Suppose $Q,R,S$ are integers of the form $p^i$ for some integer $i\ge 0$. Let $r>0$ be an integer with $r \equiv Q+R+S \pmod{q+1}$. Write $f(X)\colonequals X^r B(X^{q-1})$ where one of the following holds:
\begin{enumerate}
\item $B(X)\colonequals X^{Q+R}+X^{Q+S}+X^{R+S}-1$ with $(Q,R,S) \equiv \pm (1,1,1) \pmod 4$;
\item $B(X)\colonequals X^{Q+R}-X^{Q+S}-X^{R+S}-1$ with $(Q,R,S) \equiv \pm (1,1,-1) \pmod 4$.
\end{enumerate}
Then $f(X)$ permutes\/ $\F_{q^2}$ if and only if $\gcd(r,q-1)=1$ and one of the following holds:
\begin{enumerate}
\item $q\equiv 1\pmod 4$ and $\gcd(Q+R+S,q+1)=1$;
\item $q\equiv -1\pmod 4$ and $\gcd(Q+R+S,q-1)=1$.
\end{enumerate}  
\end{cor}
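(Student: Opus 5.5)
Corollary~\ref{4th} is the case $m=4$ of Theorem~\ref{mth}, so the plan is to specialize that theorem and check that the polynomials match. First I will fix the hypotheses. Since $q^2\equiv 1\pmod 4$ forces $q$ odd, we need $p$ odd. Then $q\equiv\pm1\pmod 4$ holds automatically, and $\F_{q^2}$ contains a primitive $4$-th root of unity $v$ (so $v^2=-1$). Computing $b$ gives
\[
b=\frac{1-v^4}{v^3-v}=\frac{1-1}{-v-v}=0 .
\]
This value is independent of the choice of $v$ and is consistent with the remark that $b\in\{0,\pm1\}$ exactly when $m\in\{3,4,6\}$.

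Next I will substitute $b=0$ into the two families of Theorem~\ref{mth}. Family (2) becomes $B(X)=X^{Q+R}-X^{Q+S}-X^{R+S}-1$ with $(Q,R,S)\equiv\pm(1,1,-1)\pmod 4$, which is literally item (2) of the corollary. Family (1) becomes $B(X)=X^{Q+R+S}-X^Q-X^R-X^S$ with $(Q,R,S)\equiv\pm(1,1,1)\pmod 4$. This does not match item (1) directly, so I will use the reciprocal form from the Remark after Theorem~\ref{mth'}, which says (via Lemma~\ref{old}) that the theorems remain true when $B(X)$ is replaced by $\pm X^{Q+R+S}B(1/X)$. We have
\[
X^{Q+R+S}B(1/X)=1-X^{R+S}-X^{Q+S}-X^{Q+R},
\]
so taking the minus sign gives $X^{Q+R}+X^{Q+S}+X^{R+S}-1$, which is item (1) of the corollary. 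The hypotheses on $r$, on $(Q,R,S)$, and the permutation criterion (namely $\gcd(r,q-1)=1$ together with the appropriate gcd condition according to $q\equiv\pm1\pmod 4$) are unchanged under this replacement, so they carry over verbatim.

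The only delicate step is this use of the reciprocal transformation. If Lemma~\ref{old} is not available in the needed form, I would verify it directly. For $x\in\mu_{q+1}$ we have $1/x=x^q$, so
\[
x^{Q+R+S}B(1/x)=x^{Q+R+S}B(x)^q .
\]
Hence $X^rB(X^{q-1})$ and $X^r\bigl(X^{Q+R+S}B(1/X)\bigr)(X^{q-1})$ have the same permutation behavior, by the standard criterion that $X^rB(X^{q-1})$ permutes $\F_{q^2}$ iff $\gcd(r,q-1)=1$ and $x^rB(x)^{q-1}$ permutes $\mu_{q+1}$, combined with $r\equiv Q+R+S\pmod{q+1}$.
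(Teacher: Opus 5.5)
Your proposal is correct and follows the paper's route: the paper obtains this corollary as the $m=4$ case of Theorem~\ref{mth}, where $b=0$, with item (1) recovered from family (1) through the replacement $B(X)\mapsto -X^{Q+R+S}B(1/X)$ that Lemma~\ref{old} justifies (the Remark after Theorem~\ref{mth'}). Your direct check of that replacement works here because the coefficients of $B$ lie in $\F_p$, so $B(x^q)=B(x)^q$ and one gets $x^rB_1(x)^{q-1}=\bigl(x^rB(x)^{q-1}\bigr)^{-1}$ on $\mu_{q+1}$ (the sign disappears since $q-1$ is even).
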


The $m\in\{3,6\}$ case of Theorem~\ref{mth'} can be restated as follows.

\begin{cor}\label{36th'}
Assume $q=p^k$ for some prime $p\ne 3$ and some integer $k>0$. Suppose $Q,R,S$ are integers of the form $p^i$ for some integer $i\ge 0$. Let $r>0$ be an integer with $r \equiv Q+R+S \pmod{q+1}$. Write $f(X)\colonequals X^r B(X^{q-1})$ where one of the following holds:
\begin{enumerate}
\item $B(X)\colonequals X^{Q+R}-X^{Q+S}-X^{R+S}+X^S-1$ with $(Q,R,S) \equiv \pm (1,1,1) \pmod 3$;
\item $B(X)\colonequals X^{Q+R}-X^{Q+S}-X^{R+S}-X^S-1$ with $(Q,R,S) \equiv \pm (1,1,1) \pmod 3$;
\item $B(X)\colonequals X^{R+S}-X^{Q+R}-X^{Q+S}+X^Q-1$ with $(Q,R,S) \equiv \pm (1,-1,1) \pmod 3$;
\item $B(X)\colonequals X^{R+S}-X^{Q+R}-X^{Q+S}-X^Q-1$ with $(Q,R,S) \equiv \pm (1,-1,1) \pmod 3$;
\item $B(X)\colonequals X^{Q+R+S}-X^Q-X^R-X^S+1$ with $(Q,R,S) \equiv \pm (1,1,-1) \pmod 3$;
\item $B(X)\colonequals X^{Q+R+S}-X^Q-X^R-X^S-1$ with $(Q,R,S) \equiv \pm (1,1,-1) \pmod 3$.
\end{enumerate}
Then $f(X)$ permutes\/ $\F_{q^2}$ if and only if $q\equiv 1\pmod 3$ and $\gcd(r,q-1)=1$ and $\gcd(Q+R-S,q+1)=1$.
\end{cor}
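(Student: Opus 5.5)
The plan is to obtain Corollary~\ref{36th'} by specializing Theorem~\ref{mth'} to $m\in\{3,6\}$; there is no new permutation argument. First I compute $b$. If $v$ is a primitive cube root of unity, then $v^3=1$ and $v^4=v$, so $b=(1-v)/(1-v)=1$. If $v$ is a primitive sixth root of unity, then $v^3=-1$ and $v^4=-v$, so $b=(1+v)/(-1-v)=-1$. Since $p\ne 3$, such roots exist in $\F_{q^2}$ and $1-v\ne 0$, $1+v\ne 0$. Moreover $3\nmid q$, so $q\equiv\pm1\pmod 3$ and $q^2\equiv 1\pmod 3$. For $p=2$ the sixth roots coincide with the cube roots up to sign, and $b=\pm1$ is the same element, so that case needs no separate treatment.

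Next I match the six cases of the corollary with Theorem~\ref{mth'}. Cases (1), (3), (5) are Theorem~\ref{mth'} with $m=3$ ($b=1$), applied to its cases (1), (2), (3) respectively. Cases (2), (4), (6) should come from $m=6$ ($b=-1$), and this requires two translations:
\begin{itemize}
\item \emph{Congruences mod $6$.} Each of $Q,R,S$ is a power of $p$ with $p\ne 3$. If $p$ is odd, every $p^i$ is odd, so $p^i\bmod 6$ is determined by $p^i\bmod 3$: $1\mapsto 1$ and $2\mapsto 5\equiv -1$. Hence $(Q,R,S)\equiv\pm(1,1,1)\pmod 3$ is equivalent to the same congruence mod $6$, and likewise for the patterns $(1,-1,1)$ and $(1,1,-1)$.
\item \emph{The condition on $q$.} The requirement $q\equiv 1\pmod 6$ is equivalent to $q\equiv 1\pmod 3$ when $q$ is odd.
\end{itemize}

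This parity translation is the only delicate point, and it fails when $p=2$. For $p=2$ I instead reuse $m=3$ itself, since then $-1=1$ in $\F_{q^2}$ and cases (2), (4), (6) coincide with (1), (3), (5). The gcd conditions $\gcd(r,q-1)=1$ and $\gcd(Q+R-S,q+1)=1$ carry over verbatim.

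With these identifications, the conclusion of Theorem~\ref{mth'} becomes exactly the stated equivalence: $f$ permutes $\F_{q^2}$ if and only if $q\equiv1\pmod 3$, $\gcd(r,q-1)=1$ and $\gcd(Q+R-S,q+1)=1$.
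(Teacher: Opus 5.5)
Your proposal is correct and follows the paper's route exactly: the paper presents Corollary~\ref{36th'} as a restatement of the $m\in\{3,6\}$ case of Theorem~\ref{mth'}. Your computation of $b=1$ for $m=3$ and $b=-1$ for $m=6$, the mod-$6$/mod-$3$ parity translation for odd $p$, and the matching of the six cases fill in the routine details the paper leaves implicit. One wording fix: in characteristic $2$ there are no primitive sixth roots of unity ($X^6-1=(X^3-1)^2$, and an even $q$ is never $\equiv\pm1\pmod 6$), so $m=6$ is not admissible at all, rather than ``needing no separate treatment''; your later reduction of cases (2), (4), (6) to (1), (3), (5) via $-1=1$ is the correct way to handle $p=2$.
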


The $m=4$ case of Theorem~\ref{mth'} can be restated as follows.

\begin{cor}\label{4th'}
Assume $q=p^k$ for some odd prime $p$ and some integer $k>0$. Suppose $Q,R,S$ are integers of the form $p^i$ for some integer $i\ge 0$. Let $r>0$ be an integer with $r \equiv Q+R+S \pmod{q+1}$. Write $f(X)\colonequals X^r B(X^{q-1})$ where one of the following holds:
\begin{enumerate}
\item $B(X)\colonequals X^{Q+R}-X^{Q+S}-X^{R+S}-1$ with $(Q,R,S) \equiv \pm (1,1,1) \pmod 4$;
\item $B(X)\colonequals X^{R+S}-X^{Q+R}-X^{Q+S}-1$ with $(Q,R,S) \equiv \pm (1,-1,1) \pmod 4$;
\item $B(X)\colonequals X^{Q+R}+X^{Q+S}+X^{R+S}-1$ with $(Q,R,S) \equiv \pm (1,1,-1) \pmod 4$.
\end{enumerate}
Then $f(X)$ permutes \/ $\F_{q^2}$ if and only if $q\equiv 1\pmod 4$ and $\gcd(r,q-1)=1$ and $\gcd(Q+R-S,q+1)=1$.
\end{cor}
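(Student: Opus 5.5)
The plan is to read Corollary~\ref{4th'} off Theorem~\ref{mth'} by specializing to $m=4$. The only work is to compute $b$ and to check that the three polynomials $B(X)$ listed in Theorem~\ref{mth'} become the three quadrinomials in the corollary.

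First, the hypothesis $q\equiv\pm1\pmod 4$ holds automatically because $p$ is odd. Next, let $v$ be a primitive $4$-th root of unity in $\F_{q^2}$. Such a $v$ exists since $4\mid q^2-1$, and $v^2=-1$. Then $v^4=1$, so $b=(1-v^4)/(v^3-v)=0$. The denominator is nonzero: $v^3-v=v(v^2-1)=-2v\ne 0$ because $p\ne 2$. This matches the remark in Theorem~\ref{mth'} that $b=0$ exactly when $m=4$.

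Now substitute $b=0$ into the three cases.
\begin{itemize}
\item Case (1) of Theorem~\ref{mth'} becomes $X^{Q+R}-X^{Q+S}-X^{R+S}-1$ with $(Q,R,S)\equiv\pm(1,1,1)\pmod 4$. This is case (1) of the corollary.
\item Case (2) becomes $X^{R+S}-X^{Q+R}-X^{Q+S}-1$ with $(Q,R,S)\equiv\pm(1,-1,1)\pmod 4$. This is case (2) of the corollary.
\item Case (3) becomes $X^{Q+R+S}-X^Q-X^R-X^S$ with $(Q,R,S)\equiv\pm(1,1,-1)\pmod 4$. This is not literally case (3) of the corollary, so we treat it separately below.
\end{itemize}

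For case (3), use the Remark after Theorem~\ref{mth'}, which relies on Lemma~\ref{old}. It says the conclusions remain valid when $B(X)$ is replaced by $\pm X^{Q+R+S}B(1/X)$. Take $B(X)=X^{Q+R+S}-X^Q-X^R-X^S$ and apply this with the sign $-1$:
\[
-X^{Q+R+S}B(1/X) = -\bigl(1-X^{R+S}-X^{Q+S}-X^{Q+R}\bigr) = X^{Q+R}+X^{Q+S}+X^{R+S}-1 .
\]
This is exactly $B(X)$ in case (3) of the corollary. Along the way we should check that the replacement keeps all the side data of Theorem~\ref{mth'} intact: the exponent $r\equiv Q+R+S\pmod{q+1}$, the congruence condition on $(Q,R,S)$, and the resulting criterion.

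With the three cases identified, the criterion of Theorem~\ref{mth'} reads: $q\equiv1\pmod4$, $\gcd(r,q-1)=1$, and $\gcd(Q+R-S,q+1)=1$. This is the stated conclusion.

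The main obstacle is case (3), and specifically justifying the reciprocal substitution. The appeal to Lemma~\ref{old} through the Remark should be enough. If it turns out to be delicate, the fallback is direct. For $x\in\mu_{q+1}$ we have $x^{-1}=x^q$, and the $x\mapsto x^{-1}$ symmetry gives $f(X)=X^rB(X^{q-1})$ and $X^{r'}\tilde B(X^{q-1})$ the same permutation behaviour. Here $\tilde B(X)=-X^{Q+R+S}B(1/X)$ and $r'$ is chosen compatibly modulo $q+1$, with $\gcd(r',q-1)=\gcd(r,q-1)$.
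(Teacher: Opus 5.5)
Your proposal is correct and follows the paper's route exactly: the corollary is the $m=4$ specialization of Theorem~\ref{mth'}, where $b=0$, and case (3) comes from the $b=0$ form $X^{Q+R+S}-X^Q-X^R-X^S$ via the replacement $B(X)\mapsto -X^{Q+R+S}B(1/X)$ that the Remark after Theorem~\ref{mth'} justifies using Lemma~\ref{old}. Your fallback with $r'$ is unnecessary, because for $x\in\mu_{q+1}$ and odd $q$ one has $x^r\bigl(-x^{Q+R+S}B(1/x)\bigr)^{q-1}=(x^{-1})^rB(x^{-1})^{q-1}$ (using $r\equiv Q+R+S\pmod{q+1}$), so the same $r$ works.
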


In our final result, which relies on the following notion, we show that in the special case $r=Q+R+S$ the polynomials $f(X)$ in Theorems~\ref{mth} and \ref{mth'} can be expressed as compositions of functions having very simple forms.

\begin{defn}
If $U$ and $V$ are $\F_q$-vector spaces, then a function $f\colon U\to U$ is \emph{$\F_q$-linearly equivalent} to a function $g\colon V\to V$ if $f=\rho\circ g\circ\eta$ for some $\F_q$-vector space isomorphisms $\rho\colon V \to U$ and $\eta\colon U\to V$.
\end{defn}

\begin{rmk}
It is easy to see that $\F_q$-linear equivalence is an equivalence relation on the union of the sets of functions $\F_{q^2}\to\F_{q^2}$ and $\F_q\times\F_q\to\F_q\times\F_q$, and that $\F_q$-linear equivalence preserves the property of a function being bijective.
\end{rmk}

\begin{rmk}\label{iso}
It is well-known that the $\F_q$-vector space automorphisms of $\F_{q^2}$ are the functions induced by $cX^q+dX$ where $c,d\in\F_{q^2}$ satisfy $c^{q+1}\ne d^{q+1}$.
Likewise, the $\F_q$-vector space isomorphisms $\F_{q^2}\to\F_q\times\F_q$ are the functions $x\mapsto \bigl(cx+(cx)^q,dx+(dx)^q\bigr)$ where $c,d\in\F_{q^2}^*$ satisfy $c^{q-1}\ne d^{q-1}$, and the $\F_q$-vector space isomorphisms $\F_q\times\F_q\to\F_{q^2}$ are $(x,y)\mapsto cx+dy$ where $c,d\in\F_{q^2}^*$ satisfy $c^{q-1}\ne d^{q-1}$.
\end{rmk}

\begin{thm}\label{equiv}
With the notation in Theorems~\ref{mth} or ~\ref{mth'} we have:
\begin{itemize}
\item if $r=Q+R+S$ and $q\equiv 1\pmod m$ then $f(X)$ in Theorem~\ref{mth} is $\F_q$-equivalent to $X^{Q+R+S}$ on\/ $\F_{q^2}$; 
\item if $r=Q+R+S$ and $q\equiv -1\pmod m$ then $f(X)$ in Theorem~\ref{mth} is $\F_q$-equivalent to $(X^{Q+R+S},Y^{Q+R+S})$ on\/ $\F_q\times\F_q$; 
\item if $r=Q+R+S$ and $q\equiv 1\pmod m$ then $f(X)$ in Theorem~\ref{mth'} is $\F_q$-equivalent to $X^{Q+R+qS}$ on\/ $\F_{q^2}$;
\item if $r=Q+R+S$ and $q\equiv -1\pmod m$ then $f(X)$ in Theorem~\ref{mth'} is $\F_q$-equivalent to $(X^{Q+R}Y^S,Y^{Q+R}X^S)$ on\/ $\F_q\times\F_q$. 
\end{itemize}
\end{thm}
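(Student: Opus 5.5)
The idea is to factor $B(X)$ over $\F_{q^2}[X]$ into linearized-exponent pieces using the root of unity $v$, and then write $f$ explicitly as a product of $\F_q$-linear forms. We work case by case; take first Theorem~\ref{mth}(1), with $(Q,R,S)\equiv(1,1,1)\pmod m$. Write $Q=p^a$, $R=p^{c}$, $S=p^{e}$. Since $b=(1-v^4)/(v^3-v)$, one checks that $b$ is chosen so that
\[
B(X)=\frac{1}{\lambda}\bigl((X-v)^{Q}(X-v)^{R}(X-v)^{S}\cdot\mu-(X-v^{-1})^{Q}(X-v^{-1})^{R}(X-v^{-1})^{S}\cdot\nu\bigr)
\]
for suitable constants $\lambda,\mu,\nu$. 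Here we use $v^Q=v^R=v^S=v$, which is exactly the congruence hypothesis, and Frobenius additivity $(X-v)^{p^i}=X^{p^i}-v^{p^i}$. The first step is to find the precise identity, expanding both products and matching the five coefficients. Because $(X-v)^Q(X-v)^R(X-v)^S$ has eight terms, I expect the right combination is instead something like $\alpha\,\ell_1(X)^{\,Q+R+S}+\beta\,\ell_2(X)^{\,Q+R+S}$ with "$\ell^{Q+R+S}$" meaning $\ell^Q\ell^R\ell^S$. The middle terms $X^{Q+R},\dots$ must cancel and the $X^Q$ terms must survive, and this cancellation is what pins down $b$.

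Next, substitute $X=x^{q-1}$ and multiply by $x^{r}=x^{Q+R+S}$; we have $r=Q+R+S$ here. The factor $x^{Q+R+S}(x^{q-1}-w)^Q(x^{q-1}-w)^R(x^{q-1}-w)^S$ becomes $L_w(x)^{Q}L_w(x)^R L_w(x)^S$ with $L_w(x)=x^q-wx$. Thus
\[
f(x)=\alpha L_{v}(x)^{Q+R+S}+\beta L_{v^{-1}}(x)^{Q+R+S}.
\]
Now split on the residue of $q$.

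If $q\equiv -1\pmod m$, then $v^{q}=v^{-1}$. Up to scaling, $L_v$ and $L_{v^{-1}}$ are $\F_q$-linear maps whose images are one-dimensional $\F_q$-subspaces $c\F_q$ and $d\F_q$. So $(x\mapsto(L_v(x)/c,\,L_{v^{-1}}(x)/d))$ is an isomorphism $\F_{q^2}\to\F_q\times\F_q$, by Remark~\ref{iso}. Then $f=\rho\circ(X^{Q+R+S},Y^{Q+R+S})\circ\eta$ with $\rho(x,y)=\alpha c^{Q+R+S}x+\beta d^{Q+R+S}y$. We must check that $\rho$ is an isomorphism, i.e. that the exponents $(q-1)$ differ; this should follow from $(c^{Q+R+S})^{q-1}$ being a power of $v$ that differs from its analogue.

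If $q\equiv 1\pmod m$, then $v\in\F_q$. Here $L_v(x)=x^q-vx$ is an $\F_q$-automorphism of $\F_{q^2}$ (since $v^{q+1}=v^2\ne1$), and $L_{v^{-1}}$ should equal $\gamma L_v(x)^q$ up to such automorphism, because $L_v(x)^q=x-vx^q=-v(x^q-v^{-1}x)$. Hence $f=\alpha y^{Q+R+S}+\beta'(y^{Q+R+S})^q$ with $y=L_v(x)$, which is $\rho\circ X^{Q+R+S}\circ L_v$ with $\rho=\beta'X^q+\alpha X$.

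For Theorem~\ref{mth'} and the $(1,1,-1)$ cases, the same factorization holds but with $v^{S}=v^{-1}$. The factor for $S$ therefore pairs with the other linear form, giving $L_v^{Q+R}L_{v^{-1}}^{S}$. When $q\equiv1\pmod m$ we use $L_{v^{-1}}\propto L_v^q$, which yields $y^{Q+R+qS}$. When $q\equiv-1\pmod m$ we get $(X^{Q+R}Y^S,\,Y^{Q+R}X^S)$ in the two coordinates.

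The main obstacle is the first step: guessing and verifying the exact two-term identity for $B$, including the constants $\alpha,\beta$ and the precise role of $b$. The remaining non-routine point is checking that the outer map $\rho$ is nondegenerate, i.e. that $c^{q+1}\ne d^{q+1}$ or $c^{q-1}\ne d^{q-1}$ holds, which should reduce to $v^2\ne1$ using $m\ge3$.
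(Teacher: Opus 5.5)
Your overall architecture is the same as the paper's. For $r=Q+R+S$ you write $f(x)$ as a combination of products of powers of the two $\F_q$-linear forms $x^q+vx$ and $vx^q+x$. If $v\in\F_q$ you use $(x^q+vx)^q=vx^q+x$ to get $\rho\circ g\circ\eta$. If $v^q=v^{-1}$ you write $v=u^{q-1}$, so that $u(vx^q+x)$ and $u(x^q+vx)$ are trace coordinates.

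\textbf{Gap: only one of the two polynomial shapes is decomposed.} Your identity covers only $X^{Q+R+S}-X^Q-X^R-X^S+b$, i.e.\ Theorem~\ref{mth}(1) and Theorem~\ref{mth'}(3). The other polynomials are not covered:
\begin{itemize}
\item $X^{Q+R}-X^{Q+S}-X^{R+S}+bX^S-1$ (Theorem~\ref{mth}(2) and Theorem~\ref{mth'}(1));
\item $X^{R+S}-X^{Q+R}-X^{Q+S}+bX^Q-1$ (Theorem~\ref{mth'}(2)).
\end{itemize}
These are in general not combinations of $(X^Q+w)(X^R+w)(X^S+w)$ with $w\in\{v,v^{-1}\}$: any such combination has equal coefficients on $X^{Q+R}$ and $X^{Q+S}$. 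So ``the same factorization holds'' is false for them, and the proposal gives no decomposition.

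\textbf{The rule for choosing the target is also wrong.} You say $S\equiv-1$ makes the $S$-factor pair with the other form, giving $X^{Q+R+qS}$. But the statement sends Theorem~\ref{mth}(2), where $S\equiv-1$, to $X^{Q+R+S}$, and Theorem~\ref{mth'}(1), where $S\equiv1$, to $X^{Q+R+qS}$. The two targets genuinely differ. For $q=9$, $m=4$, $(Q,R,S)=(1,1,27)$, $r=29$, Theorem~\ref{mth}(2) gives a permutation of $\F_{81}$. But $X^{Q+R+qS}=X^{245}$ is not a permutation, since $\gcd(245,80)=5$.

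\textbf{The missing identity.} You need a second identity, valid formally in $X^Q,X^R,X^S$ whatever the residues:
\[
X^{Q+R}-X^{Q+S}-X^{R+S}+bX^S-1=\frac{(X^Q+v)(X^R+v)(X^S+v^{-1})-(X^Q+v^{-1})(X^R+v^{-1})(X^S+v)}{v^{-1}-v}.
\]
It is used alongside the corrected form of yours:
\[
X^{Q+R+S}-X^Q-X^R-X^S+b=\frac{(X^Q+v)(X^R+v)(X^S+v)-v^2(X^Q+v^{-1})(X^R+v^{-1})(X^S+v^{-1})}{1-v^2}.
\]
Only after this do the residues enter. They decide whether each product is a pure power $(X+w)^{Q+R+S}$, giving target $X^{Q+R+S}$, or a mixed product like $(X+w)^{Q+R}(X+w^{-1})^S$, giving target $X^{Q+R+qS}$.

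\textbf{How the paper handles this.} Via Propositions~\ref{tool} and \ref{tool'}, it writes $B=a[(vX+1)^{Q+R+S}-v(X+v)^{Q+R+S}]$ in Theorem~\ref{mth} and $B=a[(vX+1)^{Q+R}(X+v)^S-v(X+v)^{Q+R}(vX+1)^S]$ in Theorem~\ref{mth'}. This may first require replacing $B$ by $\pm X^{Q+R+S}B(1/X)$; for $r=Q+R+S$ that only replaces $f(x)$ by $f(x^q)$. The residue classes then just decide which of the eight coefficients vanish.

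\textbf{Sign error.} Since $b=-(v+v^{-1})$, the linear factors must be $X+v$ and $X+v^{-1}$. With your $X-v$ and $X-v^{-1}$ the constant term comes out as $v+v^{-1}=-b$, which is wrong for odd $p$ unless $m=4$.

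\textbf{Nondegeneracy of $\rho$.} With these normalizations the check you flagged is immediate. When $q\equiv1\pmod m$, $\rho=X^q-vX$ (or $X-vX^q$) has $v^{q+1}=v^2\ne1$. When $q\equiv-1\pmod m$, $\rho(x,y)=x-vy$ (or $y-vx$) has $(-v)^{q-1}=v^{-2}\ne1$.
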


\begin{rmk}
In light of the well-known fact that $X^n$ permutes $\F_{q^i}$ if and only if $\gcd(n,q^i-1)=1$, Theorem~\ref{equiv} gives an alternative proof of the special case $r=Q+R+S$ of Theorems~\ref{mth} and \ref{mth'}.
\end{rmk}

This paper is organized as follows. In Section~\ref{fact} we give some background results. In Section~\ref{proof} we show Theorems~\ref{mth}, ~\ref{mth'}, and ~\ref{equiv}.


\section{Background results}\label{fact}

In this section we present the background results which are used in our proof of Theorems~\ref{mth} and ~\ref{mth'}. They rely on the following notation.

\begin{notation}
If $q$ is a prime power, then we write $\mu_{q+1}$ for the set of all $(q+1)$-th roots of unity in $\F_{q^2}$, and we define $\bP^1(\F_q)\colonequals \F_q\cup\{\infty\}$.
\end{notation}

We begin with the following special case of \cite[Lemma~2.1]{Zlem}.

\begin{lemma}\label{old}
Write $f(X)\colonequals X^r B(X^{q-1})$ where $q$ is a prime power, $r$ is a positive integer, and $B(X)\in\F_{q^2}[X]$. Then $f(X)$ permutes\/ $\F_{q^2}$ if and only if $\gcd(r,q-1)=1$ and $X^r B(X)^{q-1}$ permutes $\mu_{q+1}$.
\end{lemma}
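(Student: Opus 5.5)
We prove Lemma~\ref{old} by splitting $\F_{q^2}$ according to the $(q-1)$-th power map. Let $\theta\colon\F_{q^2}^*\to\mu_{q+1}$ be $x\mapsto x^{q-1}$; it is surjective with fibers the cosets $x\F_q^*$. Since $f(0)=0$ (as $r>0$), $f$ permutes $\F_{q^2}$ exactly when it permutes $\F_{q^2}^*$. Put $g(X)\colonequals X^rB(X)^{q-1}$. The key identity is
\[
f(x)^{q-1}=x^{r(q-1)}B(x^{q-1})^{q-1}=g\bigl(x^{q-1}\bigr),
\]
so $\theta\circ f=g\circ\theta$ on $\F_{q^2}^*$. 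We will also use that for $c\in\F_q^*$ we have $(cx)^{q-1}=x^{q-1}$, hence
\[
f(cx)=c^r x^rB(x^{q-1})=c^rf(x).
\]

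For sufficiency, assume $\gcd(r,q-1)=1$ and that $g$ permutes $\mu_{q+1}$. First, $f$ has no zero on $\F_{q^2}^*$: if $B(x^{q-1})=0$ then $g(x^{q-1})=0\notin\mu_{q+1}$, contradicting that $g$ maps $\mu_{q+1}$ into itself. Now suppose $f(x)=f(y)$ with $x,y\ne0$. Applying $\theta$ gives $g(x^{q-1})=g(y^{q-1})$, so $x^{q-1}=y^{q-1}$ and $y=cx$ with $c\in\F_q^*$. Then $c^rf(x)=f(x)\ne0$, so $c^r=1$, and $\gcd(r,q-1)=1$ forces $c=1$. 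Thus $f$ is injective on $\F_{q^2}^*$, hence bijective.

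For necessity, assume $f$ permutes $\F_{q^2}$. Then $f(x)\ne0$ for all $x\ne0$, so $f$ maps $\F_{q^2}^*$ into itself.
\begin{itemize}
\item If $d=\gcd(r,q-1)>1$, choose $c\in\F_q^*$ with $c\ne1$ and $c^r=1$. Then $f(cx)=f(x)$ for every $x$, contradicting injectivity, so $\gcd(r,q-1)=1$.
\item For each $u\in\mu_{q+1}$ we have $u=x^{q-1}$ for some $x\ne0$, so $g(u)=f(x)^{q-1}\in\mu_{q+1}$. Hence $g$ maps $\mu_{q+1}$ into itself.
\item Surjectivity of $g$ on $\mu_{q+1}$ follows from surjectivity of $\theta\circ f$: $f$ is onto $\F_{q^2}^*$ and $\theta$ is onto $\mu_{q+1}$. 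Since $\mu_{q+1}$ is finite, $g$ is then a permutation of $\mu_{q+1}$.
\end{itemize}

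The only delicate point is the well-definedness and nonvanishing in the sufficiency direction. This is handled by noting that $g$ mapping $\mu_{q+1}$ to itself precludes $B(u)=0$ for $u\in\mu_{q+1}$.
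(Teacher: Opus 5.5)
Your proof is correct and complete: the relation $f(x)^{q-1}=g(x^{q-1})$, the scaling $f(cx)=c^rf(x)$ for $c\in\F_q^*$, and the fiber argument settle both directions, including the nonvanishing of $B$ on $\mu_{q+1}$. The paper gives no proof of its own here; it cites the lemma as a special case of \cite[Lemma~2.1]{Zlem}. Your argument is essentially the standard proof of that cited result, specialized from a general divisor $d\mid q-1$ to the case $\F_{q^2}$, $d=q+1$. So your version gains self-containedness, while the citation gives the more general statement.
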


The next two results are reformulations of \cite[Lemmas~2.1 and 3.1]{Z-Redei}.

\begin{lemma}\label{deg1mu}
If $\alpha,\beta\in\F_{q^2}$ satisfy $\alpha^{q+1}\ne\beta^{q+1}$, then $(\beta^qX+\alpha^q)/(\alpha X+\beta)$ 
permutes $\mu_{q+1}$.
\end{lemma}

\begin{lemma}\label{mu}
If $\alpha\in\F_{q^2}\setminus\F_q$ and $\beta\in\mu_{q+1}$, then $(\alpha X+\beta\alpha^q)/(X+\beta)$ maps $\mu_{q+1}$ bijectively onto\/ $\bP^1(\F_q)$.
\end{lemma}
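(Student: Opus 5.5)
The plan is to prove this directly by treating the map as a degree-one rational function, that is, a Möbius transformation on $\bP^1(\F_{q^2})$. There are three steps: show the map is injective on $\bP^1(\F_{q^2})$, show it sends $\mu_{q+1}$ into $\bP^1(\F_q)$, and then compare cardinalities.

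\emph{Injectivity.} Write $g(X)=(\alpha X+\beta\alpha^q)/(X+\beta)$. Its associated matrix $\begin{pmatrix}\alpha & \beta\alpha^q\\ 1 & \beta\end{pmatrix}$ has determinant $\alpha\beta-\beta\alpha^q=\beta(\alpha-\alpha^q)$. This is nonzero, since $\beta\in\mu_{q+1}$ is nonzero and $\alpha\notin\F_q$ gives $\alpha\ne\alpha^q$. Hence $g$ induces a bijection of $\bP^1(\F_{q^2})$, and in particular it is injective on $\mu_{q+1}$.

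\emph{Image lies in $\bP^1(\F_q)$.} Take $x\in\mu_{q+1}$. If $x=-\beta$, the denominator vanishes while the numerator equals $\beta(\alpha^q-\alpha)\ne 0$, so $g(x)=\infty$. If $x\ne-\beta$, put $y=g(x)$ and use $x^q=x^{-1}$ and $\beta^q=\beta^{-1}$:
\[
y^q=\frac{\alpha^q x^{-1}+\beta^{-1}\alpha}{x^{-1}+\beta^{-1}}.
\]
Multiplying numerator and denominator by $x\beta$ turns this into $(\alpha^q\beta+\alpha x)/(\beta+x)=y$. Therefore $y\in\F_q$.

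\emph{Conclusion.} The map $g$ is an injection from $\mu_{q+1}$, which has $q+1$ elements, into $\bP^1(\F_q)$, which also has $q+1$ elements. Hence it is a bijection.

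There is no real obstacle here. The only point needing care is the Frobenius computation for $y^q$, where one must remember that $\beta^q=\beta^{-1}$ because $\beta\in\mu_{q+1}$.
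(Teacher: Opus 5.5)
Your proof is correct and complete. The determinant $\beta(\alpha-\alpha^q)\ne 0$ gives injectivity on $\bP^1(\F_{q^2})$. The Frobenius computation, using $x^q=x^{-1}$, $\beta^q=\beta^{-1}$ and $\alpha^{q^2}=\alpha$, together with your separate treatment of the pole $x=-\beta$, puts the image of $\mu_{q+1}$ inside $\bP^1(\F_q)$. The count $|\mu_{q+1}|=|\bP^1(\F_q)|=q+1$ then gives bijectivity.

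The paper does not actually argue this step. It declares the lemma a reformulation of Lemma~3.1 of the cited R\'edei-function paper and uses it as a black box, for instance in Proposition~\ref{inverse} with $\beta=\pm v$. So the comparison is between a citation and a self-contained verification.

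The paper's route keeps the background section short and ties the lemma to the R\'edei-function framework it comes from. Your route costs a few lines, but it removes the dependence on the external source. It also removes the need to translate that source's normalization into the present form with an arbitrary $\beta\in\mu_{q+1}$. Your argument is the standard direct one for degree-one rational functions mapping $\mu_{q+1}$ to $\bP^1(\F_q)$, so nothing is lost by using it in place of the citation.
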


The following result follows from the combination of Lemmas~\ref{deg1mu} and ~\ref{mu}.

\begin{prop}\label{inverse}
Assume $v\in\mybar\F_q$ with $v^2\ne 1$. Write $\rho(X)\colonequals (vX+1)/(X+v)$ and $\eta(X)\colonequals (-vX+1)/(X-v)$. Then $\rho(X)$ and $\eta(X)$ are degree-$1$ rational functions over\/ $\mybar\F_q$ with $\rho^{-1}(X)=\eta(X)$. Moreover, both of the following statements hold:
\begin{enumerate}
\item if $v\in\F_q$ then $\rho(X)$ permutes both\/ $\bP^1(\F_q)$ and $\mu_{q+1}$;
\item if $v\in\mu_{q+1}$ then $\rho(X)$ induces a bijection from\/ $\bP^1(\F_q)$ onto $\mu_{q+1}$ and also a bijection from $\mu_{q+1}$ onto\/ $\bP^1(\F_q)$.
\end{enumerate}
\end{prop}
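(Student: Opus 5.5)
We need to prove Proposition~\ref{inverse}. The first step is the inverse formula. Since $v^2\ne1$, the determinant of $\rho$ is $v\cdot v-1\cdot 1=v^2-1\ne0$, so $\rho$ has degree $1$. Likewise $\eta$ has determinant $(-v)(-v)-1=v^2-1\ne0$. To see that $\eta$ inverts $\rho$, compose them: the product of matrices
\[
\begin{pmatrix}-v&1\\1&-v\end{pmatrix}\begin{pmatrix}v&1\\1&v\end{pmatrix}=\begin{pmatrix}1-v^2&0\\0&1-v^2\end{pmatrix}
\]
is scalar. Hence $\eta(\rho(X))=X$, and since both are degree-$1$ we get $\rho^{-1}=\eta$.

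For part (1), suppose $v\in\F_q$. Then $\rho$ has coefficients in $\F_q$ and nonzero determinant, so it permutes $\bP^1(\F_q)$. For $\mu_{q+1}$ we use Lemma~\ref{deg1mu} with $\alpha=1$ and $\beta=v$. The lemma gives $(\beta^qX+\alpha^q)/(\alpha X+\beta)=(vX+1)/(X+v)=\rho(X)$, because $v^q=v$. The hypothesis $\alpha^{q+1}\ne\beta^{q+1}$ reads $1\ne v^2$, which holds. So $\rho$ permutes $\mu_{q+1}$.

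For part (2), suppose $v\in\mu_{q+1}$. Note that $v\notin\F_q$: if it were, then $v^{q+1}=v^2=1$, contradicting $v^2\ne1$.
\begin{itemize}
\item To show $\mu_{q+1}$ maps onto $\bP^1(\F_q)$, apply Lemma~\ref{mu} with $\alpha=v$ and $\beta=v$. The map there is $(vX+v\cdot v^q)/(X+v)$, and $v^{q+1}=1$, so this is exactly $\rho$. Lemma~\ref{mu} then says $\rho$ maps $\mu_{q+1}$ bijectively onto $\bP^1(\F_q)$.
\item To show $\bP^1(\F_q)$ maps onto $\mu_{q+1}$, it suffices to show $\eta=\rho^{-1}$ maps $\mu_{q+1}$ bijectively onto $\bP^1(\F_q)$. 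Write $\eta(X)=(-vX+1)/(X-v)$. Put $\beta=-v\in\mu_{q+1}$ and $\alpha=-v\notin\F_q$. Then $\beta\alpha^q=(-v)(-v)^q=v^{q+1}=1$, so Lemma~\ref{mu}'s map $(\alpha X+\beta\alpha^q)/(X+\beta)$ equals $(-vX+1)/(X-v)=\eta$. Hence $\eta$ is a bijection $\mu_{q+1}\to\bP^1(\F_q)$, and inverting it gives that $\rho$ is a bijection $\bP^1(\F_q)\to\mu_{q+1}$.
\end{itemize}

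There is no real obstacle here. The only care needed is in matching parameters to Lemmas~\ref{deg1mu} and~\ref{mu}, especially checking that $v\notin\F_q$ and that $\beta\alpha^q$ simplifies correctly using $v^{q+1}=1$.
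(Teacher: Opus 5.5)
Your proof is correct and takes the same route as the paper. For $v\in\F_q$ you apply Lemma~\ref{deg1mu} with $(\alpha,\beta)=(1,v)$. For $v\in\mu_{q+1}$ you apply Lemma~\ref{mu} to both $\rho$ and $\eta$, using $-v\in\mu_{q+1}$, and then use $\rho=\eta^{-1}$ to get the bijection $\bP^1(\F_q)\to\mu_{q+1}$. You are slightly more complete than the paper: you check $\eta\circ\rho=X$ explicitly, and you note that $\rho$ permutes $\bP^1(\F_q)$ when $v\in\F_q$ because its coefficients lie in $\F_q$, a point the paper leaves implicit.
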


\begin{proof}
If $v\in\F_q$ then $v\in\F_{q^2}$ with $v^{q+1}\ne 1$, which by Lemma~\ref{deg1mu} implies $\rho(X)$ and $\eta(X)$ permute $\mu_{q+1}$. If $v\in\mu_{q+1}$ then $v\in\F_{q^2}\setminus\F_q$ and $-v\in\mu_{q+1}$, which by Lemma~\ref{mu} implies $\rho(X)$ and $\eta(X)$ map $\mu_{q+1}$ bijectively onto $\bP^1(\F_q)$. Since $\rho(X)$ is the inverse of $\eta(X)$ under the composition, it follows that $\rho(X)$ and $\eta(X)$ maps $\bP^1(\F_q)$ bijectively onto $\mu_{q+1}$. 
\end{proof}

The following result is a direct consequence of Proposition~\ref{inverse}.

\begin{cor}\label{geo-cyc}
Assume $v\in\F_q\cup\mu_{q+1}$ with $v^2\ne 1$. Write $\rho(X)\colonequals (vX+1)/(X+v)$ and $\eta(X)\colonequals (-vX+1)/(X-v)$. Let $n>0$ be an integer and let $g(X)\colonequals \eta(X)\circ X^n\circ\rho(X)$. Then $g(X)$ induces a map from\/ $\bP^1(\F_q)$ into itself and a map from $\mu_{q+1}$ into itself, satisfying:
\begin{enumerate}
\item $g(X)$ permutes\/ $\bP^1(\F_q)$ if and only if one of the following holds:
\begin{itemize}
\item $v\in\F_q$ and $\gcd(n,q-1)=1$,
\item $v\in\mu_{q+1}$ and $\gcd(n,q+1)=1$;
\end{itemize}
\item $g(X)$ permutes $\mu_{q+1}$ if and only if one of the following holds:
\begin{itemize}
\item $v\in\F_q$ and $\gcd(n,q+1)=1$,
\item $v\in\mu_{q+1}$ and $\gcd(n,q-1)=1$.
\end{itemize} 
\end{enumerate}
\end{cor}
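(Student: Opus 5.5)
Since $g=\eta\circ X^n\circ\rho$ with $\eta=\rho^{-1}$ by Proposition~\ref{inverse}, I will argue on the sets rather than on formulas. First I check that $g$ maps each of $\bP^1(\F_q)$ and $\mu_{q+1}$ into itself. Then the permutation question for $g$ on each set reduces to a permutation question for $X^n$ on a single set. Here $\rho$ is extended to $\bP^1(\mybar\F_q)$ as a M\"obius transformation. Since $v^2\ne1$, its determinant $v^2-1$ is nonzero, so $\rho$ and $\eta$ are bijections of $\bP^1(\mybar\F_q)$. Also $X^n$ maps $\bP^1(\F_q)$ into itself (fixing $0$ and $\infty$) and maps $\mu_{q+1}$ into itself.

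\emph{Case $v\in\F_q$.} By Proposition~\ref{inverse}(1), $\rho$ permutes both $\bP^1(\F_q)$ and $\mu_{q+1}$, and hence so does $\eta=\rho^{-1}$. So $g$ restricted to $\bP^1(\F_q)$ is the composition of the bijection $\rho|_{\bP^1(\F_q)}$, the map $X^n$ on $\bP^1(\F_q)$, and the bijection $\eta|_{\bP^1(\F_q)}$. Therefore $g$ permutes $\bP^1(\F_q)$ if and only if $X^n$ permutes $\bP^1(\F_q)$. Because $X^n$ fixes $0$ and $\infty$, this holds if and only if $X^n$ permutes $\F_q^*$, that is, if and only if $\gcd(n,q-1)=1$. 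In the same way, $g$ permutes $\mu_{q+1}$ if and only if $X^n$ permutes the cyclic group $\mu_{q+1}$, that is, if and only if $\gcd(n,q+1)=1$.

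\emph{Case $v\in\mu_{q+1}$.} By Proposition~\ref{inverse}(2), $\rho$ maps $\bP^1(\F_q)$ bijectively onto $\mu_{q+1}$ and maps $\mu_{q+1}$ bijectively onto $\bP^1(\F_q)$. Since $\rho$ is a bijection of $\bP^1(\mybar\F_q)$, its inverse $\eta$ swaps the two sets bijectively as well. Hence $g|_{\bP^1(\F_q)}=\eta|_{\mu_{q+1}}\circ X^n|_{\mu_{q+1}}\circ\rho|_{\bP^1(\F_q)}$ maps $\bP^1(\F_q)$ into itself, and it is bijective if and only if $X^n$ permutes $\mu_{q+1}$, i.e.\ if and only if $\gcd(n,q+1)=1$. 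Symmetrically, $g$ maps $\mu_{q+1}$ into itself via $X^n$ on $\bP^1(\F_q)$, and it is bijective if and only if $\gcd(n,q-1)=1$.

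The only points that need care are the following. First, $\rho(-v)=\infty$ and $\rho(\infty)=v$, so the maps must be treated on $\bP^1$ rather than on polynomials. Second, we need $\eta=\rho^{-1}$ as maps on $\bP^1(\mybar\F_q)$, which lets us transfer the bijections of Proposition~\ref{inverse} to $\eta$. Third, we use the standard fact that $X^n$ permutes a cyclic group of order $N$ if and only if $\gcd(n,N)=1$. None of these is a real obstacle; the main step is recognizing that $g$ is conjugate to $X^n$ on the appropriate set.
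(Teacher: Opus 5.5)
Your proposal is correct and is essentially the paper's argument: the paper gives no separate proof and simply calls this corollary a direct consequence of Proposition~\ref{inverse}. That is exactly the conjugation $g=\rho^{-1}\circ X^n\circ\rho$ you spell out, which reduces the question to $X^n$ acting on $\bP^1(\F_q)$ (so $\gcd(n,q-1)=1$) or on $\mu_{q+1}$ (so $\gcd(n,q+1)=1$), with the two sets swapped when $v\in\mu_{q+1}$.
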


\begin{rmk}
The formal numerator $N(X)$ and denominator $D(X)$ of $g(X)$ in Corollary~\ref{geo-cyc} have few terms when $n$ is a sum of few powers of the characteristic $p$ of $\F_q$. More precisely, for any integer $\ell>0$, if $n$ is a sum of $\ell$ powers of $p$ then both $N(X)$ and $D(X)$ have at most $2^{\ell}$ terms. In the case that $n$ is a sum of two powers of $p$, the properties of $g(X)$ described in Corollary~\ref{geo-cyc} have been studied systematically in \cite{DZquad}. In this paper, we will study such $g(X)$ in the case that $n$ is a sum of three powers $Q,R,S$ of $p$ and $v$ is a primitive $m$-th root of unity for some integer $m\ge 3$. In this case, in general $N(X)$ and $D(X)$ have at most eight terms, and they have at most five terms under the additional assumption that $Q,R,S\equiv \pm 1\pmod m$. Moreover, if $m\in\{3,4,6\}$ and $Q,R,S$ are pairwise distinct, then all coefficients of $N(X)$ and $D(X)$ are the same up to a sign, so that we can present several classes of permutation pentanomials or quadrinomials with all coefficients $\pm 1$. In the case that $n$ is a sum of more than three powers of $p$, although our argument works, $N(X)$ and $D(X)$ have more terms which appear like less elegant than those presented in this paper. 
\end{rmk}


\section{Proofs of Theorems~\ref{mth}, ~\ref{mth'}, and ~\ref{equiv}}\label{proof}

In this section, we give proofs of Theorems~\ref{mth}, \ref{mth'}, and \ref{equiv}. First we state as follows explicitly the case of Corollary~\ref{geo-cyc} in which the integer $n$ is a sum of three powers of the characteristic of $\F_q$.

\begin{prop}\label{tool}
Assume $q=p^k$ for some prime $p$ and some integer $k>0$. Suppose $Q,R,S$ are integers of the form $p^i$ for some integer $i\ge 0$. Pick $v\in\F_q\cup\mu_{q+1}$ with $v^2\ne 1$. Write $n\colonequals Q+R+S$ and $D(X)\colonequals (vX+1)^n-v(X+v)^n$, so that we have explicitly 
\begin{align*}
D(X) = &\,
(v^{Q+R+S}-v) X^{Q+R+S}
+ (v^{Q+R}-v^{S+1}) X^{Q+R} \\
&\,+ (v^{Q+S}-v^{R+1}) X^{Q+S}
+ (v^{R+S}-v^{Q+1}) X^{R+S} \\
&\,+ (v^Q-v^{R+S+1}) X^Q
+ (v^R-v^{Q+S+1}) X^R \\
&\,+ (v^S-v^{Q+R+1}) X^S
+ (1-v^{Q+R+S+1}).
\end{align*}
Then $D(X)$ has no roots in $\mu_{q+1}$, so that $g(X)\colonequals X^n D^{(q)}(1/X) /D(X)$ induces a map from $\mu_{q+1}$ into itself, where $D^{(q)}(X)$ is the polynomial obtained from $D(X)$ by raising all coefficients to their $q$-th powers. Moreover, $g(X)$ permutes $\mu_{q+1}$ if and only if one of the following holds:
\begin{itemize}
\item $v\in\F_q$ and $\gcd(n,q+1)=1$,
\item $v\in\mu_{q+1}$ and $\gcd(n,q-1)=1$.
\end{itemize} 
\end{prop}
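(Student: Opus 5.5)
The plan is to identify $g(X)$ with the composition $\eta(X)\circ X^n\circ\rho(X)$ from Corollary~\ref{geo-cyc}, where $\rho(X)=(vX+1)/(X+v)$ and $\eta(X)=(-vX+1)/(X-v)$, and then read off the conclusion from that corollary. First I expand $D(X)=(vX+1)^n-v(X+v)^n$. Since $Q,R,S$ are powers of $p$, we have $(vX+1)^n=(v^QX^Q+1)(v^RX^R+1)(v^SX^S+1)$, and similarly for $(X+v)^n$. Multiplying out gives the eight displayed terms; this is routine bookkeeping.

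Next I compute the composition. Putting $Y=\rho(X)^n=(vX+1)^n/(X+v)^n$ into $\eta(Y)=(-vY+1)/(Y-v)$ and clearing the common factor $(X+v)^n$ gives
\[
\eta(\rho(X)^n)=\frac{(X+v)^n-v(vX+1)^n}{(vX+1)^n-v(X+v)^n}.
\]
The denominator is $D(X)$. I then check that the numerator $N(X)=(X+v)^n-v(vX+1)^n$ equals $X^nD^{(q)}(1/X)$ in both cases.
\begin{itemize}
\item If $v\in\F_q$, then $D^{(q)}=D$, and $X^nD(1/X)=(v+X)^n-v(1+vX)^n=N(X)$.
\item If $v\in\mu_{q+1}$, then $v^q=1/v$. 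Hence $D^{(q)}(X)=(X/v+1)^n-v^{-1}(X+1/v)^n$, and
\[
X^nD^{(q)}(1/X)=v^{-n}(1+vX)^n-v^{-1-n}(1+vX\cdot v)\cdots
\]
For this case it is cleaner to compute directly:
\[
X^nD^{(q)}(1/X)=(1/v+X)^n-v^{-1}(1+X/v)^n=v^{-n}\bigl[(1+vX)^n-v^{-1}(v+X)^n\bigr]=-v^{-n-1}N(X).
\]
\end{itemize}
So in the second case $g$ equals $-v^{-n-1}\,\eta\circ X^n\circ\rho$. The scalar $-v^{-n-1}$ lies in $\mu_{q+1}$, because $-1\in\mu_{q+1}$ and $v\in\mu_{q+1}$. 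Multiplying by this unit-circle constant permutes $\mu_{q+1}$, so it affects neither the claim that $g$ maps $\mu_{q+1}$ into itself nor whether $g$ is a bijection there. This sign and scalar bookkeeping is the step I expect to need the most care.

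It remains to show that $D(X)$ has no roots in $\mu_{q+1}$. Suppose $x\in\mu_{q+1}$ with $D(x)=0$. Then either $x\ne -v$, or $x=-v$.
\begin{itemize}
\item If $x\ne -v$, then $\rho(x)$ is finite and $D(x)=0$ means $\rho(x)^n=v$. By Proposition~\ref{inverse}, $\rho(x)$ lies in $\mu_{q+1}$ when $v\in\F_q$, and in $\bP^1(\F_q)$ when $v\in\mu_{q+1}$. In both cases $\rho(x)^n$ lies in the same set. We also know $\rho(x)^n=v$, which is in $\F_q$ in the first case and in $\mu_{q+1}$ in the second. Since $\F_q\cap\mu_{q+1}=\{\pm1\}$ and $v^2\ne1$, this is impossible.
\item If $x=-v$, then $-v\in\mu_{q+1}$, which forces $v\in\mu_{q+1}$. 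In that case $D(-v)=(1-v^2)^n\ne0$.
\end{itemize}

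Finally, at the points $x=-v$ or $D(x)=0$ the value $\rho(x)^n$ is $\infty$ or $v$. Since $D$ has no roots on $\mu_{q+1}$, the only such point there is $x=-v$. At that point the rational-function identity still holds after clearing denominators, because the numerator and denominator of $g$ are polynomials. Therefore $g$ agrees on $\mu_{q+1}$ with the map of Corollary~\ref{geo-cyc}, up to the unit scalar, and Corollary~\ref{geo-cyc}(2) yields exactly the stated criterion.
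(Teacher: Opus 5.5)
Your proof is correct and follows the paper's route: you identify $g$ with $\eta\circ X^n\circ\rho$ (exactly when $v\in\F_q$, and up to the factor $-v^{-n-1}\in\mu_{q+1}$ when $v\in\mu_{q+1}$) and then read off Corollary~\ref{geo-cyc}(2). The only difference is how you show $D$ has no roots on $\mu_{q+1}$: you argue directly via $D(x)=(x+v)^n\bigl(\rho(x)^n-v\bigr)$, Proposition~\ref{inverse}, $\F_q\cap\mu_{q+1}=\{\pm1\}$ and $D(-v)=(1-v^2)^n\ne0$, whereas the paper gets it from $\gcd(N,D)=1$ (forced by $\deg g_1=n$) plus the fact that $g_1$ has no poles on $\mu_{q+1}$; both work, though you should delete the abandoned half-line just before ``it is cleaner to compute directly''.
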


\begin{proof}
Write $\rho(X)\colonequals (vX+1)/(X+v)$ and $\eta(X)\colonequals (-vX+1)/(X-v)$. By Corollary~\ref{geo-cyc} $g_1(X)\colonequals \eta(X)\circ X^n\circ\rho(X)$ induces a map from $\mu_{q+1}$ into itself. Moreover, $g_1(X)$ permutes $\mu_{q+1}$ if and only if either [ $v\in\F_q$ and $\gcd(n,q+1)=1$ ] or [ $v\in\mu_{q+1}$ and $\gcd(n,q-1)=1$ ]. It is easy to check that $g_1(X) = N(X) / D(X)$ where $N(X)\colonequals X^n D(1/X)$. Since $\deg(g_1)=n$ and $\max(\deg(N),\deg(D))\le n$, it follows that $N(X)$ and $D(X)$ are coprime. Thus $D(X)$ has no roots in $\mu_{q+1}$ since $g_1(X)$ maps $\mu_{q+1}$ into itself. It is routine to verify that $g_1(X)=g(X)$ if $v\in\F_q$, and $g_1(X)=-v^{n+1}g(X)$ if $v\in\mu_{q+1}$. Thus $g(X)$ induces a map from $\mu_{q+1}$ into itself. Moreover, $g(X)$ permutes $\mu_{q+1}$ if and only if $g_1(X)$ permutes $\mu_{q+1}$.
\end{proof}

In light of Proposition~\ref{tool}, we can classify permutation polynomials among certain classes of polynomial with at most eight terms.

\begin{thm}\label{pp-8}
Assume $q=p^k$ for some prime $p$ and some integer $k>0$. Suppose $Q,R,S$ are integers of the form $p^i$ for some integer $i\ge 0$. Write $n\colonequals Q+R+S$, and let $r>0$ be an integer with $r\equiv n\pmod{q+1}$. Suppose $v\in\F_q\cup\mu_{q+1}$ with $v^2\ne 1$. Write $D(X)\colonequals (vX+1)^n-v(X+v)^n$ and $B(X)\colonequals a D(X)$ for some $a\in\F_{q^2}^*$. Then $f(X)\colonequals X^r B(X^{q-1})$ permutes\/ $\F_{q^2}$ if and only if $\gcd(r,q-1)=1$, and either
\begin{itemize}
\item $v\in\F_q$ and $\gcd(n,q+1)=1$, or
\item $v\in\mu_{q+1}$ and $\gcd(n,q-1)=1$.
\end{itemize} 
\end{thm}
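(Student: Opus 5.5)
By Lemma~\ref{old}, $f(X)$ permutes $\F_{q^2}$ if and only if $\gcd(r,q-1)=1$ and $h(X)\colonequals X^r B(X)^{q-1}$ permutes $\mu_{q+1}$. The plan is to show that $h(X)$, restricted to $\mu_{q+1}$, is a nonzero constant multiple of the map $g(X)$ of Proposition~\ref{tool}. Then Proposition~\ref{tool} gives the conclusion immediately, since multiplying by a constant in $\mu_{q+1}$ preserves the property of permuting $\mu_{q+1}$.

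The computation goes as follows. Take $x\in\mu_{q+1}$. Proposition~\ref{tool} says $D(x)\ne 0$, so $B(x)=aD(x)\ne 0$ and
\[
h(x)=x^r\,\frac{B(x)^q}{B(x)}=x^r\,\frac{a^q D(x)^q}{a D(x)}.
\]
Since $x^q=1/x$, we have $D(x)^q=D^{(q)}(x^q)=D^{(q)}(1/x)$. Since $x^{q+1}=1$ and $r\equiv n\pmod{q+1}$, we have $x^r=x^n$. Together these give
\[
h(x)=a^{q-1}\,\frac{x^n D^{(q)}(1/x)}{D(x)}=a^{q-1}g(x).
\]
Here $a^{q-1}\in\mu_{q+1}$ because $a\in\F_{q^2}^*$.

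By Proposition~\ref{tool}, $g$ maps $\mu_{q+1}$ into itself, and it permutes $\mu_{q+1}$ exactly under the stated conditions: $v\in\F_q$ with $\gcd(n,q+1)=1$, or $v\in\mu_{q+1}$ with $\gcd(n,q-1)=1$. Since $h=a^{q-1}g$ on $\mu_{q+1}$, the map $h$ permutes $\mu_{q+1}$ under exactly the same conditions. Combining this with Lemma~\ref{old} proves the theorem.

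The only step needing care is the identity $D(x)^q=D^{(q)}(1/x)$ together with the reduction $x^r=x^n$ on $\mu_{q+1}$; both are routine. All of the substantive work is already contained in Proposition~\ref{tool}.
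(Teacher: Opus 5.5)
Your proposal is correct and follows the paper's route exactly. The paper proves this theorem in one line by combining Lemma~\ref{old} with Proposition~\ref{tool}. You have written out the routine identity $x^rB(x)^{q-1}=a^{q-1}g(x)$ for $x\in\mu_{q+1}$ that makes that combination work, using $D(x)\ne 0$, $x^q=1/x$, $x^r=x^n$ and $a^{q-1}\in\mu_{q+1}$.
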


\begin{proof}
It follows directly from Lemma~\ref{old} and Proposition~\ref{tool}.    
\end{proof}

Now we are ready to show Theorem~\ref{mth}, as a case of Theorem~\ref{pp-8} in which $v$ is a root of unity so that at least three terms vanish.

\begin{proof}[Proof of Theorem~\ref{mth}]
It is easy to check that $b\in\{0,\pm 1\}$ if and only if $m\in\{3,4,6\}$. If $(Q,R,S) \equiv (1,1,1) \pmod m$, then the conclusion follows from Theorem~\ref{pp-8} by taking $a=(v^3-v)^{-1}$. Suppose $(Q,R,S) \equiv (-1,-1,-1) \pmod m$. Write $B_1(X)\colonequals X^{Q+R+S} B(1/X)$, by Lemma~\ref{old} $X^r B(X^{q-1})$ permutes $\F_{q^2}$ if and only if $X^r B_1(X^{q-1})$ permutes $\F_{q^2}$. So the conclusion follows from Theorem~\ref{pp-8} by taking $a=(1-v^{-2})^{-1}$. If $(Q,R,S) \equiv (1,1,-1) \pmod m$, then the conclusion follows from Theorem~\ref{pp-8} by taking $a=(v^2-1)^{-1}$. Suppose $(Q,R,S) \equiv (-1,-1,1) \pmod m$. Write $B_2(X)\colonequals - X^{Q+R+S} B(1/X)$, by Lemma~\ref{old} $X^r B(X^{q-1})$ permutes $\F_{q^2}$ if and only if $X^r B_2(X^{q-1})$ permutes $\F_{q^2}$. So the conclusion follows from Theorem~\ref{pp-8} by taking $a=(v^{-1}-v)^{-1}$. It concludes the proof.
\end{proof}

Similarly, we state as follows explicitly the case of Corollary~\ref{geo-cyc} in which $n=Q+R-S$ for powers $Q,R,S$ of the characteristic of $\F_q$.

\begin{prop}\label{tool'}
Assume $q=p^k$ for some prime $p$ and some integer $k>0$. Suppose $Q,R,S$ are integers of the form $p^i$ for some integer $i\ge 0$. Suppose $v\in\F_q\cup\mu_{q+1}$ with $v^2\ne 1$. Write 
\[
D(X)\colonequals (vX+1)^{Q+R}(X+v)^S-v(X+v)^{Q+R}(vX+1)^S,
\]
so that we have explicitly 
\begin{align*}
D(X) = &\,
(v^{Q+R}-v^{S+1}) X^{Q+R+S}
+ (v^{Q+R+S}-v) X^{Q+R} \\
&\,+ (v^Q-v^{R+S+1}) X^{Q+S}
+ (v^R-v^{Q+S+1}) X^{R+S} \\
&\,+ (v^{Q+S}-v^{R+1}) X^Q
+ (v^{R+S}-v^{Q+1}) X^R \\
&\,+ (1-v^{Q+R+S+1}) X^S
+ (v^S-v^{Q+R+1}).
\end{align*}
Write $N(X)\colonequals X^{Q+R+S} D(1/X)$. Then $g(X)\colonequals N(X)/D(X)$ maps $\mu_{q+1}$ into itself. Moreover, $D(X)$ has no roots in $\mu_{q+1}$ and $g(X)$ permutes $\mu_{q+1}$ if and only if $v\in\F_q$ and $\gcd(Q+R-S,q+1)=1$.
\end{prop}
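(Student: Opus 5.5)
The plan is to imitate the proof of Proposition~\ref{tool}, but with the exponent $n\colonequals Q+R-S$ in place of $Q+R+S$. This causes two complications: $n$ may be zero or negative, and the roles of $\rho$ and its conjugate change.

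With $\rho(X)\colonequals (vX+1)/(X+v)$ and $\eta(X)\colonequals (-vX+1)/(X-v)$, consider
\[
g_1(X)\colonequals \eta(X)\circ \rho(X)^{Q+R}\rho(X)^{-S}.
\]
We have
\[
\rho(X)^{Q+R}\rho(X)^{-S} = \frac{(vX+1)^{Q+R}(X+v)^S}{(X+v)^{Q+R}(vX+1)^S} \equalscolon \frac{A(X)}{C(X)} .
\]
Applying $\eta$ gives
\[
g_1=\frac{-vA+C}{A-vC}.
\]
Here $A-vC$ is $D(X)$ up to the evident identification. We must check that $-vA+C$ equals $X^{Q+R+S}D(1/X)$ up to a constant. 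This should follow from the symmetry $X^{Q+R+S}A(1/X)=C(X)$, which holds because $(vX+1)$ and $(X+v)$ are swapped by $X\mapsto 1/X$ after multiplying by $X$. So $g_1 = -N/D$ or a similar scalar multiple; I will fix signs by direct comparison.

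Next I need a reduction to Corollary~\ref{geo-cyc}. Since $\rho$ is bijective on $\mu_{q+1}$ (respectively from $\mu_{q+1}$ to $\bP^1(\F_q)$), the map $X\mapsto X^{Q+R-S}$ must make sense on the image set. On $\mu_{q+1}$ or $\F_q^*$, the map $x\mapsto x^{-S}$ is well defined and bijective; on $\bP^1(\F_q)$ we also send $0\leftrightarrow\infty$ appropriately. The map $x\mapsto x^{Q+R-S}$ on $\mu_{q+1}$ (when $v\in\F_q$) permutes it iff $\gcd(Q+R-S,q+1)=1$, interpreting the gcd with absolute value. So when $v\in\F_q$, $g_1$ maps $\mu_{q+1}$ into itself, and it is a permutation iff that gcd is $1$.

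When $v\in\mu_{q+1}$, $\rho$ sends $\mu_{q+1}$ to $\bP^1(\F_q)$. The power map sends $\bP^1(\F_q)$ to itself, and $\eta$ returns it to $\mu_{q+1}$. The power map is bijective on $\bP^1(\F_q)$ iff $\gcd(Q+R-S,q-1)=1$. I now claim this never happens. Indeed $Q,R,S$ are powers of $p$, so modulo $p-1$ each is $\equiv 1$. Hence $Q+R-S\equiv 1 \pmod{p-1}$ — that is fine. The real obstruction should be parity: for odd $p$, $Q+R-S$ is odd and $q-1$ is even, so that does not obviously obstruct either.

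This is the main obstacle: I need to understand why the statement excludes $v\in\mu_{q+1}$. The key is the term $x^{-S}$ combined with conjugation. For $x\in\F_q$, $x^{S}$ is fine. But the point is that $\rho(X)^{-S}$ must be written via $q$-th powers to fit "$g$ maps $\mu_{q+1}$ into itself" as $N/D$ with $N=X^{Q+R+S}D(1/X)$. For $v\in\F_q$, on $\mu_{q+1}$ we have $\rho(x)^{-S}=\rho(x)^{qS}$, so $N/D$ really is the composition with exponent $Q+R+qS\equiv Q+R-S \pmod{q+1}$. For $v\in\mu_{q+1}$ the relevant exponent is instead taken modulo $q-1$, and $N/D$ computes $\eta\circ X^{Q+R-S}\circ\rho$ with $X^{-S}$ on $\F_q$. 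So I must recheck the claim directly.

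I expect that for $v\in\mu_{q+1}$, the rational function $N/D$ is \emph{not} the composition $\eta\circ X^{n}\circ\rho$. Rather, since $v^{-1}=v^q$, it equals a composition in which the outer map sends $\bP^1(\F_q)$ into $\bP^1(\F_q)$ instead of into $\mu_{q+1}$. In that case the image of $\mu_{q+1}$ would not be $\mu_{q+1}$, and the conditions "maps into itself and permutes" fail. I would verify this by computing $g(x)^{q+1}$ for $x\in\mu_{q+1}$ using $x^q=1/x$. Coprimality of $N,D$, and hence no roots of $D$ on $\mu_{q+1}$, follows as before from $\deg g_1=Q+R+S$, since $A/C$ has degree $Q+R+S$ when $v^2\ne1$.
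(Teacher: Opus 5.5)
You set things up the same way the paper does. With $A=(vX+1)^{Q+R}(X+v)^S$ and $C=(X+v)^{Q+R}(vX+1)^S$ one has $D=A-vC$ and $N=C-vA$, so $g_1=\eta\circ X^{n}\circ\rho=N/D$ exactly, for every $v$ (there is no sign to fix).

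The gap is in your last step and in your explanation of why $v\in\mu_{q+1}$ is excluded. The claim that $A/C$ has degree $Q+R+S$, so that $N$ and $D$ are coprime, is false. In fact $A/C=\rho^{\,n}$ has degree $|Q+R-S|$. Moreover $N$ and $D$ share the factor $\bigl((vX+1)(X+v)\bigr)^{s}$ with $s=\min(S,Q+R)\ge 1$; the paper records this as $\gcd(N,D)=(vX+1)^S(X+v)^S$ for $n>0$.

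This common factor is the missing idea. It gives $D(-v)=D(-1/v)=0$ for every $v$. Since $(-v)^{q+1}=v^{q+1}$, the root $-v$ lies in $\mu_{q+1}$ exactly when $v\in\mu_{q+1}$; for $v\in\F_q$ one has $v^{q+1}=v^2\ne 1$. So for $v\in\mu_{q+1}$ the clause ``$D$ has no roots in $\mu_{q+1}$'' fails, and that is the whole reason the equivalence forces $v\in\F_q$.

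For $v\in\F_q$, write $D$ as this factor times the reduced denominator of $g_1$. The factor does not vanish on $\mu_{q+1}$. The reduced denominator cannot vanish there either, since $g_1$ has no poles on $\mu_{q+1}$. Hence $N/D$ agrees pointwise with $g_1$ on $\mu_{q+1}$, and Corollary~\ref{geo-cyc} gives the $\gcd(Q+R-S,q+1)$ criterion, as you say.

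Your conjectured explanation for $v\in\mu_{q+1}$ would not work. The identity $N/D=\eta\circ X^n\circ\rho$ is purely formal, so for $v\in\mu_{q+1}$ the reduced function still maps $\mu_{q+1}$ into itself by Corollary~\ref{geo-cyc}. Computing $g(x)^{q+1}$ will therefore reveal no obstruction, and $\gcd(n,q-1)$ is not the obstruction either.

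For example, take $Q=R=S=1$. Then $D=(1-v^2)(vX+1)(X+v)$ and $N=(1-v^2)X(vX+1)(X+v)$, so $g=X$ as a rational function. It permutes $\mu_{q+1}$ for every admissible $v$, yet $D(-v)=0$.

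Note also that your coprimality claim, combined with $g_1(\mu_{q+1})\subseteq\mu_{q+1}$, would show that $D$ has no roots in $\mu_{q+1}$ even when $v\in\mu_{q+1}$. That contradicts the statement you are trying to prove.
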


\begin{proof}
Write $n\colonequals Q+R-S$. The case $n=0$ is easy, and the case $n<0$ is similar to the case $n>0$. Henceforth we suppose $n>0$. Write $\rho(X)\colonequals (vX+1)/(X+v)$ and $\eta(X)\colonequals (-vX+1)/(X-v)$. By Corollary~\ref{geo-cyc} $g_1(X)\colonequals \eta(X)\circ X^n\circ\rho(X)$ induces a map from $\mu_{q+1}$ into itself. It is easy to check that $g_1(X)=N(X)/D(X)$, so that $g_1(X)=g(X)$, hence $g(X)$ maps $\mu_{q+1}$ into itself. Since $\deg(g_1)=n$, we know $\gcd(N,D) = (vX+1)^S(X+v)^S$, which has no roots in $\mu_{q+1}$ if and only if $v\in\F_q$. Since $g_1(X)$ maps  $\mu_{q+1}$ into itself, it follows $D(X)$ has no roots in $\mu_{q+1}$ if and only if $v\in\F_q$. Henceforth we suppose $v\in\F_q$. By Corollary~\ref{geo-cyc}, $\gcd(n,q+1)=1$ if and only if $g_1(X)$ permutes $\mu_{q+1}$, or equivalently $g(X)$ permutes $\mu_{q+1}$. 
\end{proof}

In light of Proposition~\ref{tool'}, we can classify permutation polynomials among certain classes of polynomial with at most eight terms.

\begin{thm}\label{pp-8'}
Assume $q=p^k$ for some prime $p$ and some integer $k>0$. Suppose $Q,R,S$ are integers of the form $p^i$ for some integer $i\ge 0$. Let $r>0$ be an integer with $r\equiv Q+R+S\pmod{q+1}$. Suppose $v\in\F_q\cup\mu_{q+1}$ with $v^2\ne 1$. Pick $a\in\F_{q^2}^*$, and write 
\[
B(X)\colonequals a \bigl( (vX+1)^{Q+R}(X+v)^S-v(X+v)^{Q+R}(vX+1)^S \bigr).
\]
Then $f(X)\colonequals X^r B(X^{q-1})$ permutes\/ $\F_{q^2}$ if and only if $v\in\F_q$ and $\gcd(r,q-1)=1$ and $\gcd(Q+R-S,q+1)=1$.
\end{thm}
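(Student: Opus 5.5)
The plan mirrors the proof of Theorem~\ref{pp-8}: combine Lemma~\ref{old} with Proposition~\ref{tool'}. By Lemma~\ref{old}, $f(X)$ permutes $\F_{q^2}$ if and only if $\gcd(r,q-1)=1$ and $h(X)\colonequals X^rB(X)^{q-1}$ permutes $\mu_{q+1}$. So the task reduces to showing that $h$ permutes $\mu_{q+1}$ exactly when $v\in\F_q$ and $\gcd(Q+R-S,q+1)=1$.

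First I will rewrite $h$ on $\mu_{q+1}$ in terms of $g(X)=N(X)/D(X)$ from Proposition~\ref{tool'}. Put $B=aD$ and write $n_0\colonequals Q+R+S$. For $x\in\mu_{q+1}$ we have $x^q=1/x$ and $x^r=x^{n_0}$, since $r\equiv n_0\pmod{q+1}$.

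Suppose $D(x)\neq0$. Then $B(x)^{q-1}=a^{q-1}D^{(q)}(1/x)/D(x)$. The key identity is $x^{n_0}D^{(q)}(1/x)=c\,N(x)$ for some constant $c\in\F_{q^2}^*$ depending only on $v$.
\begin{itemize}
\item If $v\in\F_q$, then $D^{(q)}=D$, so $c=1$.
\item If $v\in\mu_{q+1}$, then $v^q=v^{-1}$. Conjugating each factor gives $(v^{-1}X+1)=v^{-1}(X+v)$ and $(X+v^{-1})=v^{-1}(vX+1)$, and $v$ becomes $v^{-1}$. One checks that $D^{(q)}(1/X)X^{n_0}$ is then a scalar multiple of $N(X)$ (with $N(X)=X^{n_0}D(1/X)$). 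This is the same computation as the relation $g_1=-v^{n+1}g$ in the proof of Proposition~\ref{tool}.
\end{itemize}
Consequently $h(x)=a^{q-1}c\,g(x)$ wherever $D(x)\neq0$.

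If $v\in\F_q$, Proposition~\ref{tool'} says $D$ has no roots on $\mu_{q+1}$. Hence $h$ is a nonzero scalar multiple of $g$ on $\mu_{q+1}$, and $h$ permutes $\mu_{q+1}$ if and only if $g$ does, that is, if and only if $\gcd(Q+R-S,q+1)=1$.

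If $v\in\mu_{q+1}$, Proposition~\ref{tool'} says $D$ has a root $x_0\in\mu_{q+1}$. Then $h(x_0)=0\notin\mu_{q+1}$, so $h$ does not permute $\mu_{q+1}$, and $f$ is not a permutation.

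The main obstacle I expect is the scalar-identity verification in the case $v\in\mu_{q+1}$. Strictly, though, this case only needs the root of $D$ on $\mu_{q+1}$, so the identity is required only for $v\in\F_q$, where it is trivial. I also need to handle the degenerate case $Q+R=S$ (which forces $n=0$), implicitly covered by Proposition~\ref{tool'}: there $\gcd(0,q+1)=q+1\neq1$ and $g$ is constant, consistent with the statement.
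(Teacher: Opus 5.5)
Your proposal is correct and follows the paper's own argument. The paper proves Theorem~\ref{pp-8'} in one line, ``follows from Lemma~\ref{old} and Proposition~\ref{tool'}''. You have written out the two details that line leaves implicit. First, for $v\in\F_q$, $X^rB(X)^{q-1}=a^{q-1}g(X)$ on $\mu_{q+1}$. Second, for $v\in\mu_{q+1}$, $D$ has a root on $\mu_{q+1}$, so $X^rB(X)^{q-1}$ does not permute $\mu_{q+1}$; that root can be taken to be $-v$, since $X+v$ divides $D(X)$.
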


\begin{proof}
It follows directly from Lemma~\ref{old} and Proposition~\ref{tool'}.   
\end{proof}

Now we are ready to show Theorem~\ref{mth'}, as a case of Theorem~\ref{pp-8'} in which $v$ is a root of unity so that at least three terms vanish.

\begin{proof}[Proof of Theorem~\ref{mth'}]
It is easy to check that $b\in\{0,\pm 1\}$ if and only if $m\in\{3,4,6\}$. If $(Q,R,S) \equiv (1,1,1) \pmod m$, then the conclusion follows from Theorem~\ref{pp-8'} by taking $a=(v^3-v)^{-1}$. Suppose $(Q,R,S) \equiv (-1,-1,-1) \pmod m$. Write $B_1(X)\colonequals X^{Q+R+S} B(1/X)$, by Lemma~\ref{old} $X^r B(X^{q-1})$ permutes $\F_{q^2}$ if and only if $X^r B_1(X^{q-1})$ permutes $\F_{q^2}$. So the conclusion follows from Theorem~\ref{pp-8'} by taking $a=(1-v^{-2})^{-1}$. Suppose $(Q,R,S) \equiv (1,-1,1) \pmod m$. Write $B_2(X)\colonequals X^{Q+R+S} B(1/X)$, by Lemma~\ref{old} $X^r B(X^{q-1})$ permutes $\F_{q^2}$ if and only if $X^r B_2(X^{q-1})$ permutes $\F_{q^2}$. So the conclusion follows from Theorem~\ref{pp-8'} by taking $a=(v^2-1)^{-1}$. If $(Q,R,S) \equiv (-1,1,-1) \pmod m$, then the conclusion follows from Theorem~\ref{pp-8'} by taking $a=(v-v^{-1})^{-1}$. If $(Q,R,S) \equiv (1,1,-1) \pmod m$, then the conclusion follows from Theorem~\ref{pp-8'} by taking $a=(v^2-1)^{-1}$. Suppose $(Q,R,S) \equiv (-1,-1,1) \pmod m$. Write $B_3(X)\colonequals - X^{Q+R+S} B(1/X)$, by Lemma~\ref{old} $X^r B(X^{q-1})$ permutes $\F_{q^2}$ if and only if $X^r B_3(X^{q-1})$ permutes $\F_{q^2}$. Thus the conclusion follows from Theorem~\ref{pp-8'} by taking $a=(v^{-1}-v)^{-1}$.
\end{proof}

Finally, we show Theorem~\ref{equiv} as follows.

\begin{proof}[Proof of Theorem~\ref{equiv}]
First we suppose $q\equiv 1\pmod m$. Let $\eta$ be the $\F_q$-vector space automorphism of $\F_{q^2}$ which is induced by $X^q+vX$. Let $\rho$ be $\F_q$-vector space automorphism of $\F_{q^2}$ which is induced by $X^q-vX$ if $(Q,R,S)\equiv (1,1,1)$ or $(1,1,-1)$ or $(-1,1,-1) \pmod m$ and by $-vX^q+X$ if $(Q,R,S)\equiv (-1,-1,-1)$ or $(-1,-1,1)$ or $(1,-1,1)\pmod m$. Let $g\colon\F_{q^2}\to\F_{q^2}$ be the function which is induced by $X^{Q+R+S}$ for Theorem~\ref{mth} and by $X^{Q+R+qS}$ for Theorem~\ref{mth'}. It is routine to verify that the map on $\F_{q^2}$ induced by $f(X)$ equals some nonzero constant in $\F_{q^2}$ times $\rho\circ g\circ\eta$.

Next we suppose $q\equiv -1\pmod m$, so that $v=u^{q-1}$ for some $u\in\F_{q^2}^*$. Since $(u^q)^{q-1} \ne u^{q-1}$, the function $\eta\colon x\mapsto \bigl(ux+(ux)^q,u^qx+(u^qx)^q\bigr)$ gives an $\F_q$-vector space isomorphism $\F_{q^2}\to\F_q\times\F_q$. Since $u^q=uv$ we have $\eta(x) = u(vx^q+x,x^q+vx)$ for any $x\in\F_{q^2}$. Let $\rho\colon\F_q\times\F_q\to\F_{q^2}$ be the $\F_q$-vector space isomorphism  which is defined by $(x,y)\mapsto x-vy$ if $(Q,R,S)\equiv (1,1,1)$ or $(1,1,-1)$ or $(-1,1,-1) \pmod m$ and by $y-vx$ if $(Q,R,S)\equiv (-1,-1,-1)$ or $(-1,-1,1)$ or $(1,-1,1) \pmod m$. Let $g$ be the map on $\F_q\times\F_q$ which is induced by $(X^{Q+R+S},Y^{Q+R+S})$ for Theorem~\ref{mth} and by $(X^{Q+R}Y^S,Y^{Q+R}X^S)$ for Theorem~\ref{mth'}. It is routine to verify that the map on $\F_{q^2}$ induced by $f(X)$ equals a nonzero constant in $\F_{q^2}$ times $\rho\circ g\circ\eta$, so that $f(X)$ is $\F_q$-equivalent to $g$, which concludes the proof.
\end{proof}



\end{document}